\documentclass[11pt,reqno]{amsart}

\usepackage[T1]{fontenc}
\usepackage{lmodern}
\usepackage{microtype}
\usepackage{amsmath,amssymb,amsthm,mathtools,mathrsfs}
\usepackage{enumitem}
\usepackage{booktabs,longtable,array}
\usepackage[hidelinks]{hyperref}

\usepackage{orcidlink}

\newtheorem{theorem}{Theorem}[section]
\newtheorem{lemma}[theorem]{Lemma}
\newtheorem{proposition}[theorem]{Proposition}
\newtheorem{corollary}[theorem]{Corollary}

\theoremstyle{definition}

\theoremstyle{remark}
\newtheorem{remark}[theorem]{Remark}

\theoremstyle{definition}
\newtheorem{example}[theorem]{Example}

\newcommand{\norm}[1]{\left\lVert #1\right\rVert}
\newcommand{\dd}{\,\mathrm{d}}

\title[Compactness and essential norms of bilinear Hardy operators]{Compactness and Essential Norms of Weighted Bilinear Hardy Operators on General Intervals}

\author{Saikat Kanjilal\,\orcidlink{0000-0002-4359-8343}}
\address{Department of Mathematics, JIS University,
Kolkata - 700109, West Bengal, India}

\email{saikat.kanjilal.07@gmail.com}

\date{}

\hypersetup{
  pdftitle={Compactness and Essential Norms of Weighted Bilinear Hardy Operators on General Intervals},
  pdfauthor={Saikat Kanjilal},
  pdfsubject={Compactness, essential quasi-norms, and quantitative approximation for weighted bilinear Hardy operators},
  pdfkeywords={weighted bilinear Hardy operator, quasi-Banach target, essential norm, Hausdorff measure of noncompactness, approximation numbers, endpoint localisation}
}

\begin{document}

\begin{abstract}
Let \(I=(a,b)\) be an open interval with finite or infinite endpoints.  For
\(1<p_1,p_2<\infty\) and \(0<q<\infty\), we study the weighted bilinear
Hardy operator
\(
\mathcal H_2(f,g)(x)=(\int_a^x f)(\int_a^x g)
\)
from \(L^{p_1}(v_1;I)\times L^{p_2}(v_2;I)\) to \(L^q(u;I)\), including the
quasi-Banach target range \(0<q<1\).  With
\(\vartheta=\min\{1,q\}\), the target and bounded-bilinear-map spaces are
treated through their complete powered metrics.  Under explicit local
integrability assumptions on the weights, compactness is equivalent to
boundedness and vanishing endpoint-tail quasi-norms.  Whenever the full
operator is bounded, the \(\vartheta\)-power of the directed combined-tail
limit equals exactly the powered essential distance, finite-rank distance,
and Hausdorff measure of noncompactness in the target metric.  Equivalently,
the unpowered quantities agree under the stated quasi-norm-radius
normalisation.  Piecewise-constant middle-window approximants give
quantitative bounds for the bilinear approximation numbers. The
finite-rank-distance identity then implies that these numbers converge to the
essential (quasi-)norm.  Applying published norm-equivalent boundedness
characteristics to endpoint-truncated output weights yields explicit
compactness criteria in the convex, mixed, moderately subcritical, and deeply
subcritical regimes.  Algebraic companion exponents are used to make the
negative powers arising for \(q<1\) explicit.  Four worked examples include
an asymmetric mixed-range threshold and a nonlocally convex
\(L^{1/2}(0,1)\) example for which
\(a_{m+1}^{(2)}(\mathcal H_2)=O(m^{-1/2})\).

\end{abstract}

\keywords{weighted bilinear Hardy operator, quasi-Banach target, essential norm, Hausdorff measure of noncompactness, approximation numbers, endpoint localisation}

\subjclass[2020]{Primary 26D10; Secondary 46A16, 46E30, 47H60}

\maketitle

\section{Introduction}
\label{sec:introduction}

Weighted Hardy inequalities are a classical model for two-weight estimates
in analysis.  The one-dimensional theory begins with the Muckenhoupt
condition and extends to alternative weight characterisations, iterated
operators, multidimensional forms, and Hardy--Steklov constructions. See
\cite{Muckenhoupt1972,PerssonSamko2024} and the references therein.

Compactness requires more than boundedness.  For linear Hardy and Volterra
operators, the established theory includes endpoint localisation,
finite-rank approximation, distance from compact operators, measures of
noncompactness, and approximation numbers
\cite{EdmundsGurkaPick1994,EdmundsStepanov1994,Opic1994,
JainGupta2003,Prokhorov2000}.
Stepanov and Ushakova obtained weight criteria and two-sided noncompactness
estimates for a two-dimensional rectangular Hardy operator
\cite{StepanovUshakova2022}, and later gave upper and lower
approximation-number estimates for the same operator
\cite{StepanovUshakova2024}.  These results provide the linear and
rectangular antecedents for the present work.

At the operator-ideal level, general multilinear approximation numbers and
measures of noncompactness, together with bilinear interpolation results in
Banach and quasi-Banach settings, are available in
\cite{FernandezMastyloDaSilva2013,MastyloSilva2018,BesoyCobos2019}.  They do
not provide the compact-middle cutoff identity isolated below or verify its
Hardy-specific local hypotheses.

The corresponding bilinear boundedness problem has a separate development.
Aguilar Ca\~nestro, Ortega Salvador and Ram\'irez Torreblanca characterised
the admissible weights for the basic bilinear Hardy inequality in the Banach
exponent range \cite{AguilarCanestroOrtegaRamirez2012}.  K\v{r}epela
subsequently gave an iteration method for the finite-exponent range with
\(q>1\) \cite{Krepela2017}.  Kanjilal, Persson and Shambilova stated the five
boundedness regimes for every \(0<q<\infty\), including the fully subcritical
cases with \(q<1\) \cite{KanjilalPerssonShambilova2019}.  Equivalent
conditions and related operators were also studied in
\cite{StepanovShambilova2019,StepanovUshakova2019,
GogatishviliJainKanjilal2022}.
The surrounding theory includes bilinear Hardy--Steklov operators,
multidimensional and discrete variants, weak-type inequalities, and
metric-measure formulations
\cite{JainKanjilalStepanovUshakova2018,JainKanjilalPersson2019,
JainKanjilalShambilovaStepanov2020,GarciaGarciaOrtegaSalvador2023,
GarciaGarciaOrtegaSalvador2024,RuzhanskyShriwastawaVerma2024,
MohantyJainJain2025}.

Let \(I=(a,b)\) be any nonempty open interval, with either endpoint allowed
to be infinite.  We study the same-variable bilinear Hardy map
\[
\mathcal H_2(f,g)(x)=Hf(x)Hg(x),
\qquad
Hf(x)=\int_a^x f(t)\,\dd t,
\]
from
\[
L^{p_1}(v_1;I)\times L^{p_2}(v_2;I)
\quad\text{to}\quad
L^q(u;I),
\qquad
1<p_1,p_2<\infty,\quad 0<q<\infty.
\]
Compactness is understood jointly on the product of the two unit balls.  It
cannot be inferred merely from compactness of the slice maps obtained by
fixing one input.

The range \(0<q<1\) requires a separate topological formulation.  The
functional on \(L^q(u;I)\) is then a quasi-norm rather than a norm, and the
space is nonlocally convex.  We put
\[
\vartheta=\min\{1,q\},
\qquad
\rho(h,k)=\norm{h-k}_{L^q(u;I)}^\vartheta,
\]
and work throughout in this complete metric.  The extension is not obtained
by invoking a Banach-space theorem formally.  Uniform absolute continuity on
compact subsets, compactness of finite-dimensional ranges, closure of compact
bilinear maps, compact-perturbation estimates, and finite-net estimates are
all proved with the \(\vartheta\)-triangle inequality.  At the same time,
the exact \(q\)-additivity of the quasi-norm on disjoint supports preserves
the sharp two-tail and finite-rank formulas.

Four concrete ingredients make this metric formulation applicable to the
operator at hand.  We prove completeness and indicator contraction in the
weighted \(L^q\) target, prove completeness of the corresponding space of
bounded bilinear maps, construct finite-rank approximants on every bounded
middle window, and give explicit shrinking endpoint sets for all four
interval geometries.  These statements supply the hypotheses used by the
localisation and finite-centre arguments rather than leaving them as abstract
metric assumptions.

After proving that every middle truncation is compact by an explicit
finite-rank construction, we separate the common cutoff argument into two
abstract tiers.  The compact-middle tier identifies compactness, essential
distance, and Hausdorff noncompactness with the directed output tail.  The
finite-rank tier adds finite-rank distance and the approximation-number limit
under a local finite-rank-density hypothesis.  The interior estimates and
partition sampler verify both tiers for \(\mathcal H_2\).  Consequently, the
full operator is compact if and only if it is bounded and the norms of its
lower and upper endpoint restrictions tend to zero.

The principal quantitative result concerns a bounded \(\mathcal H_2\).  If
\(\gamma(c,d)\) is the quasi-norm of its combined lower and upper tail outside
\((c,d)\), then
\[
\lim_{\substack{c\downarrow a\\d\uparrow b}}\gamma(c,d)
\]
equals each of the following quantities.
\[
\norm{\mathcal H_2}_{\mathrm e},\qquad
\operatorname{dist}_{\mathrm{fr}}(\mathcal H_2),\qquad
\chi_{L^q(u;I)}
\bigl(\mathcal H_2(B_{p_1}\times B_{p_2})\bigr).
\]
Thus the endpoint tails determine not only whether compactness holds, but
also the exact distance from compact and finite-rank bilinear maps under the
normalisations fixed in Section~\ref{sec:preliminaries}.  The same partition
construction yields explicit upper estimates for the bilinear approximation
numbers.  Their convergence to the essential (quasi-)norm is then an
immediate consequence of the finite-rank-distance identity and the defining
rank infima.

At the metric scale, the exact statement is obtained by raising every term
to \(\vartheta\). The powered essential distance, powered finite-rank
distance, metric-radius Hausdorff measure of noncompactness, and powered tail
limit coincide.  The quasi-norm-radius statement displayed above is its
equivalent unpowered form.

Finally, the localization principle transfers published norm-equivalent
boundedness characteristics to compactness and essential-norm criteria.  The
simple Muckenhoupt-type expression applies when
\(q\geq\max(p_1,p_2)\).  We also state the weight-only conditions in the two
mixed regimes and in both fully subcritical regimes.  In the latter case the
criterion changes at
\[
\frac1q=\frac1{p_1}+\frac1{p_2}.
\]
Combining the localization principle with those cited boundedness
characterizations gives compactness criteria for each finite-exponent regime
covered by them.  The displayed weight formulas and the exponent split are
prior boundedness results. The contribution here is their endpoint-truncated
compactness and essential-distance consequence.

Thus the abstract compact-middle theorem records the reusable finite-\(q\)
localization infrastructure.  The Hardy-specific contribution is its
constructive realization for the same-variable bilinear map. For every
bounded \(\mathcal H_2\), one combined-tail limit is identified exactly with
compact distance, finite-rank distance, and Hausdorff noncompactness,
including through the powered metric when \(0<q<1\).  The explicit weight
criteria and examples are applications and consistency checks for that
operator-theoretic core.

\subsection*{Comparison with the closest results}

The relation between the principal results of this paper and the closest
linear and bilinear work is summarised below.

{\footnotesize
\renewcommand{\arraystretch}{1.00}
\begin{longtable}{@{}
>{\raggedright\arraybackslash}p{0.20\textwidth}
>{\raggedright\arraybackslash}p{0.34\textwidth}
>{\raggedright\arraybackslash}p{0.36\textwidth}@{}}
\toprule
Source & Principal theorem & Relation to the present results\\
\midrule
\endfirsthead
\toprule
Source & Principal theorem & Relation to the present results\\
\midrule
\endhead
\bottomrule
\endfoot
Edmunds--Gurka--Pick
\cite{EdmundsGurkaPick1994}
&
Endpoint/interior compactness criteria for linear generalized Hardy maps in
weighted Banach function spaces (Theorem~1 and Lemma~2), and two-sided
finite-rank-distance estimates (Theorem~2).
&
The present results apply an analogous localization scheme to the joint
bilinear map \(Hf\,Hg\), include \(0<q<1\), and obtain an exact combined-tail
identity under the present normalization.
\\
Opic
\cite{Opic1994}
&
Lower and upper bounds for the distance of generalized weighted linear
Hardy--Riemann--Liouville operators from compact operators.
&
Theorem~\ref{thm:essential-norm} treats a bilinear product map and identifies
its compact distance exactly with one combined-tail limit.
\\
Edmunds--Stepanov
\cite{EdmundsStepanov1994}
&
Compactness, measures of noncompactness, and approximation numbers for a
class of linear Volterra integral operators, with endpoint quantities in
two-sided estimates.
&
For the bilinear product map, the exact tail identity identifies the limiting
finite-rank obstruction. The approximation-number limit then follows from
the definition.
\\
Jain--Gupta
\cite{JainGupta2003}
&
Weight characterisations for compactness of a linear Hardy--Steklov operator
with variable integration limits.
&
The present operator instead has two independent inputs and output
\(Hf\,Hg\). Compactness concerns their joint product-ball image and an exact
tail obstruction.
\\
Prokhorov
\cite{Prokhorov2000}
&
Boundedness and compactness criteria for a class of weighted linear
fractional Volterra operators, including quasi-Banach parameter ranges under
additional hypotheses.
&
The present results concern the nonfractional bilinear product \(Hf\,Hg\), all
four open-interval geometries, and an exact combined-tail identity.
\\
Aguilar Ca\~nestro et al., K\v{r}epela, and
Kanjilal--Persson--Shambilova
\cite{AguilarCanestroOrtegaRamirez2012,Krepela2017,
KanjilalPerssonShambilova2019}
&
Norm-equivalent boundedness criteria for the same-variable bilinear Hardy
inequality.  The literal five-regime statement for \(0<q<\infty\), including
fully subcritical \(q<1\), is due to Kanjilal--Persson--Shambilova.
&
Corollaries~\ref{cor:explicit-convex-compactness},
\ref{cor:mixed-compactness}, and \ref{cor:subcritical-compactness} apply
those prior characteristics to truncated output weights, yielding
compactness criteria and two-sided essential-distance estimates.
\\
Stepanov--Ushakova
\cite{StepanovUshakova2022}
&
For a linear rectangular Hardy operator on \(\mathbb R_+^2\), compactness
criteria and a two-sided estimate for a finite-rank-distance
noncompactness quantity.
&
The present one-dimensional map is genuinely bilinear. For its combined
output tail, Theorem~\ref{thm:essential-norm} gives equality rather than norm
equivalence, including \(q<1\).
\\
Fernandez--Mastylo--da Silva, Mastylo--Silva, and Besoy--Cobos
\cite{FernandezMastyloDaSilva2013,MastyloSilva2018,BesoyCobos2019}
&
Multilinear approximation numbers under the span-of-image rank convention,
multilinear noncompactness quantities, and bilinear interpolation of
noncompactness in Banach and quasi-Banach settings.
&
These works supply operator-ideal definitions and interpolation results, but
not the compact-middle cutoff theorem or its Hardy realization.  In the
present paper, the essential-distance and noncompactness identities follow
from the compact-middle tier. Equality with finite-rank distance for
\(\mathcal H_2\) additionally uses the middle-window bilinear sampler.
\\
Stepanov--Ushakova
\cite{StepanovUshakova2024}
&
Separate upper and lower approximation-number estimates for a compact linear
rectangular Hardy operator with factorable output weight.
&
Proposition~\ref{prop:quantitative-finite-rank} constructs rank-controlled
approximants for \(Hf\,Hg\) on a middle interval.  The limiting statement is
an immediate corollary of the finite-rank-distance identity.
\\
\end{longtable}
}

The comparison also separates exact and norm-equivalent statements.  The
operator-theoretic identity in Theorem~\ref{thm:essential-norm} is exact.
The published bilinear weight characteristics cited above are equivalent to
the operator (quasi-)norm only up to exponent-dependent constants.  Accordingly,
Section~\ref{sec:characteristics} gives exact compactness criteria but
two-sided, rather than exact, weight estimates for the essential
(quasi-)norm.

The proof of the exact formula has two complementary parts.  Compact middle
truncations provide the upper estimates for essential distance and
noncompactness, while their finite-rank approximants add the finite-rank
distance.  Conversely, the image of any compact bilinear map has uniformly
vanishing \(L^q\)-mass on shrinking endpoint sets.  This forces every compact
perturbation to retain at least the limiting combined-tail quasi-norm.  The
same finite-net argument, with all distances raised to \(\vartheta\), gives
the lower bound for the Hausdorff measure of noncompactness.

Four examples illustrate the criteria.  A symmetric half-line model has a
critical weight at which boundedness holds but compactness fails.  A
unit-weight bounded-interval model has two explicitly vanishing endpoint
characteristics.  The third example uses
\[
p_1=q=2<p_2=4
\]
and produces a different sharp threshold in an asymmetric mixed exponent
regime outside the convex boundedness range.
The fourth has \(p_1=p_2=2\) and \(q=1/2\).  It verifies the negative
algebraic powers in the deep-subcritical characteristic, proves both endpoint
tails directly, and yields the quantitative estimate
\(a_{m+1}^{(2)}(\mathcal H_2)=O(m^{-1/2})\).

The input endpoints \(p_i=1\) and \(p_i=\infty\), and the target endpoint
\(q=\infty\), are not included.  Section~\ref{sec:limitations} records why
the corresponding mechanisms differ.

The paper is organised as follows.  Sections~2 and 3 set the weighted and
localisation frameworks.  Section~4 proves middle compactness and the
quantitative finite-rank estimates.  Section~5 establishes the compactness,
essential-norm, and noncompactness theorems by first isolating the abstract
two-tier localization engine and then applying it to \(\mathcal H_2\).
Section~6 records the four interval geometries.  Section~7 gives the explicit
weight criteria and the four examples.  Section~8 discusses the excluded
endpoint regimes, and Section~9 concludes.  The auxiliary compactness
principles are proved in Appendix~A.

\section{Setting and quasi-Banach preliminaries}
\label{sec:preliminaries}

Throughout this section, let
\[
-\infty\leq a<b\leq\infty,
\qquad
I:=(a,b),
\]
and let
\begin{equation}
1<p_{1},p_{2}<\infty,
\qquad
0<q<\infty.
\label{eq:standing-exponent-range}
\end{equation}
For each \(i\in\{1,2\}\), let \(p_i'\) denote the conjugate exponent of
\(p_i\), that is,
\[
\frac{1}{p_i}+\frac{1}{p_i'}=1.
\]

Let \(v_1,v_2\) be measurable input weights on \(I\), positive and finite
almost everywhere. Let \(u\) be a measurable nonnegative output weight on
\(I\).  Assume that
\[
u\in L^1_{\mathrm{loc}}(I)
\]
and
\begin{equation}
V_i(x)
:=
\int_a^x v_i(t)^{1-p_i'}\,dt
<\infty,
\qquad x\in I,\quad i=1,2.
\label{eq:local-weight-condition}
\end{equation}
Because the integrand defining \(V_i\) is positive almost everywhere,
\begin{equation}
0<V_i(x)<\infty,
\qquad x\in I.
\label{eq:Vi-strictly-positive}
\end{equation}
This observation will be relevant when negative powers of \(V_i\) occur in
the quasi-Banach weight characteristics.

Set
\[
X_i:=L^{p_i}(v_i;I),
\qquad
Y:=L^q(u;I),
\]
\[
\dd\mu_u(x):=u(x)\,\dd x.
\]
The input norms and output quasi-norm are
\[
\norm{f}_{X_i}
=
\left(\int_I |f(x)|^{p_i}v_i(x)\,dx\right)^{1/p_i}
\]
and
\[
\norm{h}_{Y}
=
\left(\int_I |h(x)|^q u(x)\,dx\right)^{1/q}.
\]
As usual, functions equal \(u(x)\,dx\)-almost everywhere are identified.

For \(f\in X_i\), define the one-sided Hardy operator by
\[
Hf(x):=\int_a^x f(t)\,dt,
\qquad x\in I.
\]
The associated bilinear Hardy operator is
\[
\mathcal H_2(f,g)(x):=Hf(x)Hg(x).
\]
Condition \eqref{eq:local-weight-condition} guarantees that \(Hf(x)\) and
\(Hg(x)\) are finite for every \(x\in I\).

\begin{remark}[Output-weight hypothesis]
\label{rem:u-hypothesis}
The assumption \(u\in L^1_{\mathrm{loc}}(I)\) is used only through the
local finiteness of the weighted output measure on bounded interior windows.
No almost-everywhere positivity assumption on \(u\) is required.
\end{remark}

\subsection{\texorpdfstring{The target topology for all finite \(q\)}
{The target topology for all finite q}}

Put
\begin{equation}
\vartheta:=\min\{1,q\}.
\label{eq:theta-definition}
\end{equation}
The space \(Y\) is a Banach space when \(q\geq1\) and a complete
quasi-Banach space when \(0<q<1\).  In both cases,
\begin{equation}
\norm{h+k}_Y^{\vartheta}
\leq
\norm{h}_Y^{\vartheta}+\norm{k}_Y^{\vartheta},
\qquad h,k\in Y.
\label{eq:theta-triangle}
\end{equation}
For \(0<q<1\), this follows from
\(|s+t|^q\leq |s|^q+|t|^q\). For \(q\geq1\), it is the ordinary triangle
inequality.  Moreover, if \(h\) and \(k\) have disjoint measurable supports,
then the stronger identity
\begin{equation}
\norm{h+k}_Y^q
=
\norm{h}_Y^q+\norm{k}_Y^q
\label{eq:disjoint-q-additivity}
\end{equation}
holds for every \(0<q<\infty\).

The formula
\begin{equation}
\rho_Y(h,k):=\norm{h-k}_Y^{\vartheta}
\label{eq:Lq-metric}
\end{equation}
defines a complete metric that induces the quasi-norm topology of \(Y\).
All closures, compactness statements, and finite-net arguments below refer
to this topology.  Thus no local convexity or Hahn--Banach argument is used
when \(q<1\).

\begin{lemma}[Weighted \(L^q\) metric and measurable cut-offs]
\label{lem:weighted-Lq-completeness}
Let \(0<q<\infty\), let \(Y=L^q(\mu_u)\), and let
\(\vartheta=\min\{1,q\}\).  Then \(\rho_Y\) in
\eqref{eq:Lq-metric} is a complete metric.  If \(E\subset I\) is measurable
and \(P_Eh:=\chi_Eh\), then
\begin{equation}
\rho_Y(P_Eh,P_Ek)\leq \rho_Y(h,k),
\qquad h,k\in Y.
\label{eq:indicator-nonexpansive}
\end{equation}
If \((E_n)\) is a sequence of measurable sets such that
\(\chi_{E_n}\to0\) \(\mu_u\)-almost everywhere, then
\begin{equation}
\rho_Y(P_{E_n}h,0)\longrightarrow0
\qquad (h\in Y).
\label{eq:indicator-tail-pointwise}
\end{equation}
\end{lemma}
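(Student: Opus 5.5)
The plan is to prove the three assertions of Lemma~\ref{lem:weighted-Lq-completeness} in order, using only \eqref{eq:theta-triangle} and pointwise facts about \(|\cdot|^q\).

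\emph{Metric axioms.} Symmetry is immediate. Definiteness holds because \(\rho_Y(h,k)=0\) forces \(|h-k|^qu=0\) almost everywhere, which is exactly equality in \(Y\) under the stated identification. The triangle inequality is \eqref{eq:theta-triangle} applied to \(h-k=(h-m)+(m-k)\). The metric induces the quasi-norm topology because its balls are the quasi-norm balls with radius raised to the power \(\vartheta\).

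\emph{Completeness.} I will use the standard Riesz--Fischer route, avoiding local convexity.
\begin{itemize}
\item Take a \(\rho_Y\)-Cauchy sequence \((h_n)\) and pass to a subsequence with \(\rho_Y(h_{n_{j+1}},h_{n_j})\leq 2^{-j}\).
\item Set \(G_N=|h_{n_1}|+\sum_{j<N}|h_{n_{j+1}}-h_{n_j}|\). By \eqref{eq:theta-triangle}, \(\norm{G_N}_Y^\vartheta\leq \norm{h_{n_1}}_Y^\vartheta+\sum_j 2^{-j}\).
\item Monotone convergence then gives \(G=\lim G_N\in Y\). In particular \(G<\infty\) \(\mu_u\)-a.e., so the telescoping series converges a.e. to some \(h\), with \(|h|\leq G\).
\item Fatou's lemma (or dominated convergence with majorant \((2G)^q u\)) gives \(\rho_Y(h_{n_j},h)\to0\).
\item The Cauchy property then upgrades subsequence convergence to convergence of the whole sequence.
\end{itemize}
This is the only step with any real content. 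The point to watch is that for \(q<1\) the estimate must be carried in the \(\vartheta\)-powered form, since \(\norm{\cdot}_Y\) itself is not subadditive.

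\emph{Cut-off estimates.} Inequality \eqref{eq:indicator-nonexpansive} follows from the pointwise bound \(|\chi_E(h-k)|^q\leq|h-k|^q\). Integrating against \(u\) and raising to \(\vartheta/q\) preserves the inequality. For \eqref{eq:indicator-tail-pointwise}, \(|\chi_{E_n}h|^qu\to0\) \(\mu_u\)-a.e. and is dominated by \(|h|^qu\in L^1(I)\). Dominated convergence gives \(\norm{P_{E_n}h}_Y^q\to0\), hence \(\rho_Y(P_{E_n}h,0)\to0\).

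None of these steps is a genuine obstacle. The care needed is only in the completeness argument for \(0<q<1\), handled as above by summing \(\vartheta\)-powers.
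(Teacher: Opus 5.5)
Your proof is correct. The metric axioms and the two cut-off estimates match the paper's argument essentially verbatim; the only real difference is in the completeness step.

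\textbf{How the paper proves completeness.} The paper argues explicitly only for \(0<q<1\) and cites the classical theorem for \(q\geq1\). It works entirely with \(q\)-th powers:
\begin{itemize}
\item Tonelli gives \(\sum_j|g_j|^q<\infty\) almost everywhere.
\item The elementary bound \(|g_j(x)|\leq|g_j(x)|^q\), valid once \(|g_j(x)|\leq1\), yields absolute convergence of the telescoping series.
\item Countable subadditivity \(\bigl|\sum_j z_j\bigr|^q\leq\sum_j|z_j|^q\) gives the explicit rate \(\rho_Y(h,h_{n_J})\leq\sum_{j\geq J}2^{-j}\) and shows \(h\in Y\).
\end{itemize}
No convergence theorem beyond Tonelli is needed.

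\textbf{How you prove it.} You run the Riesz--Fischer majorant argument instead. The \(\vartheta\)-triangle inequality bounds \(\norm{G_N}_Y^{\vartheta}\), monotone convergence puts \(G\) in \(Y\), and dominated convergence finishes. (The majorant \(G^qu\) already suffices, since \(|h-h_{n_J}|\leq\sum_{j\geq J}|g_j|\leq G\).)

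\textbf{What each route buys.} Your route is uniform in \(q\in(0,\infty)\), so it also covers \(q\geq1\) without appealing to the classical result. It uses only the \(\vartheta\)-triangle inequality and monotone convergence in \(L^q(\mu_u)\), so it transfers to other quasi-Banach function lattices with the Fatou property. Your Fatou variant also recovers the paper's quantitative rate. The paper's route is more hands-on, but it depends on pointwise inequalities special to \(q<1\).

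One detail you handle implicitly should be said aloud. \(G<\infty\) holds only \(\mu_u\)-almost everywhere, which suffices because elements of \(Y\) are identified modulo \(\mu_u\)-null sets. Defining \(h\) as zero off the convergence set keeps it measurable.
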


\begin{proof}
The metric axioms, except completeness, follow from
\eqref{eq:theta-triangle}. Separation is understood modulo
\(\mu_u\)-almost-everywhere equality.  For \(q\geq1\), completeness is the
usual completeness of \(L^q(\mu_u)\).  We give the argument needed below for
\(0<q<1\).  In this case
\[
\rho_Y(h,k)=\int_I |h-k|^q\,\dd\mu_u.
\]
Let \((h_n)\) be \(\rho_Y\)-Cauchy.  Choose measurable representatives of
the sequence.  All subsequent pointwise statements are made on the
intersection of the countably many full-measure sets on which the relevant
identities hold.  Choose a subsequence \((h_{n_j})\) such
that
\[
\rho_Y(h_{n_{j+1}},h_{n_j})\leq 2^{-j},
\qquad j\geq1,
\]
and put \(g_j=h_{n_{j+1}}-h_{n_j}\).  Tonelli's theorem gives
\[
\int_I\sum_{j=1}^{\infty}|g_j(x)|^q\,\dd\mu_u(x)
=
\sum_{j=1}^{\infty}\rho_Y(h_{n_{j+1}},h_{n_j})<\infty.
\]
Consequently \(\sum_j|g_j(x)|^q<\infty\) almost everywhere.  At such a point
only finitely many \(|g_j(x)|\) exceed one, and thereafter
\(|g_j(x)|\leq |g_j(x)|^q\).  Hence \(\sum_jg_j(x)\) converges absolutely.
Define
\[
h=h_{n_1}+\sum_{j=1}^{\infty}g_j
\]
on this full-measure set, and define it to be zero on the complementary
null set.  The resulting function \(h\) is measurable as the pointwise limit
of measurable partial sums.  Since \(|\sum_j z_j|^q\leq\sum_j|z_j|^q\),
\[
\rho_Y(h,h_{n_J})
\leq
\sum_{j=J}^{\infty}\rho_Y(h_{n_{j+1}},h_{n_j})
\longrightarrow0.
\]
The same estimate, together with \(h_{n_1}\in L^q(\mu_u)\), shows that
\(h\in L^q(\mu_u)\).  Thus the selected subsequence converges, and the
Cauchy property gives convergence of the full sequence.

For a measurable \(E\),
\[
\norm{P_E(h-k)}_Y^q
=\int_E|h-k|^q\,\dd\mu_u
\leq\norm{h-k}_Y^q,
\]
which proves \eqref{eq:indicator-nonexpansive}.  Finally,
\[
\norm{P_{E_n}h}_Y^q
=\int_I\chi_{E_n}|h|^q\,\dd\mu_u\longrightarrow0
\]
by dominated convergence.  This implies
\eqref{eq:indicator-tail-pointwise} for every finite \(q\).
\end{proof}

A bilinear map \(T:X_1\times X_2\to Y\) is bounded if
\[
\norm{T}
:=
\sup_{\norm{f}_{X_1}\leq1,\,\norm{g}_{X_2}\leq1}
\norm{T(f,g)}_Y
<\infty.
\]
The resulting operator functional is a norm for \(q\geq1\) and a
quasi-norm for \(q<1\). In either case,
\begin{equation}
\norm{S+T}^{\vartheta}
\leq
\norm{S}^{\vartheta}+\norm{T}^{\vartheta}.
\label{eq:operator-theta-triangle}
\end{equation}
Let \(\mathcal B_2(X_1,X_2;Y)\) denote the vector space of all bounded
bilinear maps from \(X_1\times X_2\) to \(Y\), and define
\begin{equation}
\rho_{\mathcal B}(S,T):=\norm{S-T}^{\vartheta}.
\label{eq:operator-powered-metric}
\end{equation}

\begin{lemma}[Completeness of the bounded bilinear-map space]
\label{lem:bounded-bilinear-completeness}
The metric space
\(
(\mathcal B_2(X_1,X_2;Y),\rho_{\mathcal B})
\)
is complete.
\end{lemma}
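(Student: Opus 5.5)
The argument is the standard completeness proof for operator spaces, carried out at the powered scale so that only \eqref{eq:operator-theta-triangle} and the completeness of \((Y,\rho_Y)\) from Lemma~\ref{lem:weighted-Lq-completeness} are used.

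First, \(\rho_{\mathcal B}\) is a metric. Symmetry is clear, and the triangle inequality is \eqref{eq:operator-theta-triangle} applied to \(R-S\) and \(S-T\). For separation, \(\norm{T}=0\) forces \(T(f,g)=0\) for all \(f,g\) in the unit balls. Bilinear homogeneity, \(T(\lambda f,\mu g)=\lambda\mu T(f,g)\) together with \(\norm{\lambda h}_Y=|\lambda|\norm{h}_Y\), then gives \(T=0\). The same homogeneity yields the pointwise bound
\[
\norm{T(f,g)}_Y\leq\norm{T}\,\norm{f}_{X_1}\norm{g}_{X_2}
\]
for all \(f,g\), obtained by rescaling to the unit balls; the case \(f=0\) or \(g=0\) is trivial.

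Next, let \((T_n)\) be \(\rho_{\mathcal B}\)-Cauchy. For fixed \((f,g)\), the bound above gives
\[
\rho_Y\bigl(T_n(f,g),T_m(f,g)\bigr)\leq\rho_{\mathcal B}(T_n,T_m)\,\bigl(\norm{f}_{X_1}\norm{g}_{X_2}\bigr)^{\vartheta}.
\]
So \((T_n(f,g))\) is \(\rho_Y\)-Cauchy, and by Lemma~\ref{lem:weighted-Lq-completeness} it converges to a limit, which I define to be \(T(f,g)\in Y\). Bilinearity of \(T\) follows by passing to the limit in
\[
T_n(\alpha f+\beta f',g)=\alpha T_n(f,g)+\beta T_n(f',g),
\]
and similarly in the second slot. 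This uses continuity of addition and scalar multiplication in the \(\rho_Y\) topology, which comes from \eqref{eq:theta-triangle} and homogeneity.

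The remaining step, and the only slightly delicate one, is to show that \(T\) is bounded and that \(\rho_{\mathcal B}(T_n,T)\to0\); this is where \(q<1\) could cause trouble. Given \(\varepsilon>0\), choose \(N\) such that \(\rho_{\mathcal B}(T_n,T_m)<\varepsilon\) for \(n,m\geq N\). Fix \((f,g)\) in the product of unit balls. Then
\[
\norm{T_n(f,g)-T_m(f,g)}_Y^{\vartheta}<\varepsilon
\]
for all \(m\geq N\). Letting \(m\to\infty\) and using continuity of \(\rho_Y\) (a metric is continuous in each argument) gives \(\norm{T_n(f,g)-T(f,g)}_Y^{\vartheta}\leq\varepsilon\), uniformly in \((f,g)\). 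Hence \(\norm{T_n-T}^{\vartheta}\leq\varepsilon\) for \(n\geq N\). By \eqref{eq:operator-theta-triangle},
\[
\norm{T}^{\vartheta}\leq\norm{T_N}^{\vartheta}+\varepsilon<\infty,
\]
so \(T\in\mathcal B_2(X_1,X_2;Y)\) and \(T_n\to T\) in \(\rho_{\mathcal B}\). No local convexity is needed at any point, only the \(\vartheta\)-triangle inequality.
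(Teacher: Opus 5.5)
Your proof is correct and follows the paper's argument essentially step for step. You bound \(\rho_Y(T_n(f,g),T_m(f,g))\) pointwise, take the limit using completeness of \((Y,\rho_Y)\) from Lemma~\ref{lem:weighted-Lq-completeness}, obtain bilinearity from continuity of the vector operations, let \(m\to\infty\) to get \(\rho_{\mathcal B}(T_n,T)\leq\varepsilon\), and conclude boundedness via the \(\vartheta\)-triangle inequality. The only additions are your explicit checks of the metric axioms and of the homogeneity bound \(\norm{T(f,g)}_Y\leq\norm{T}\norm{f}_{X_1}\norm{g}_{X_2}\), which the paper takes for granted.
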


\begin{proof}
Let \((T_n)\) be \(\rho_{\mathcal B}\)-Cauchy.  For each
\((f,g)\in X_1\times X_2\),
\[
\rho_Y(T_n(f,g),T_m(f,g))
\leq
\rho_{\mathcal B}(T_n,T_m)
\norm{f}_{X_1}^{\vartheta}\norm{g}_{X_2}^{\vartheta}.
\]
Lemma~\ref{lem:weighted-Lq-completeness} therefore gives a pointwise limit
\[
T(f,g):=\lim_{n\to\infty}T_n(f,g)\quad\text{in }Y.
\]
Continuity of addition and scalar multiplication in \(Y\) shows that \(T\)
is bilinear.  If \(n,m\geq N\) and
\(\rho_{\mathcal B}(T_n,T_m)<\varepsilon\), then, for unit vectors \(f,g\),
letting \(m\to\infty\) gives
\[
\rho_Y(T_n(f,g),T(f,g))\leq\varepsilon.
\]
Taking the supremum over the product unit ball yields
\(\rho_{\mathcal B}(T_n,T)\leq\varepsilon\).  For the same fixed \(n\), the
\(\vartheta\)-triangle inequality gives, on the product unit ball,
\[
\norm{T(f,g)}_Y^{\vartheta}
\leq
\norm{T_n(f,g)}_Y^{\vartheta}
+\rho_Y(T_n(f,g),T(f,g))
\leq
\norm{T_n}^{\vartheta}+\varepsilon.
\]
Hence \(T\) is bounded, and \(T_n\to T\) in \(\rho_{\mathcal B}\).
\end{proof}

We call \(T\) compact if
\(T(B_{X_1}\times B_{X_2})\) has compact closure in \(Y\).  A map is
finite-rank if the linear span of its range is finite-dimensional.  We write
\begin{align*}
\mathcal K_2(X_1,X_2;Y)
&:=\{T\in\mathcal B_2(X_1,X_2;Y):T\text{ is compact}\},\\
\mathcal F_2(X_1,X_2;Y)
&:=\{T\in\mathcal B_2(X_1,X_2;Y):T\text{ is finite-rank}\}.
\end{align*}

The class \(\mathcal K_2(X_1,X_2;Y)\) is a linear subspace. Sums and scalar
multiples of compact bilinear maps are compact.  It is closed in
\(\rho_{\mathcal B}\) by Lemma~\ref{lem:closure-compact-bilinear}.

The essential (quasi-)norm and finite-rank distance of \(T\) are
\begin{align}
\norm{T}_{\mathrm e}
&:=
\inf_{K\in\mathcal K_2(X_1,X_2;Y)}\norm{T-K},
\label{eq:essential-norm-definition}\\
\operatorname{dist}_{\mathrm{fr}}(T)
&:=
\inf_{F\in\mathcal F_2(X_1,X_2;Y)}\norm{T-F}.
\label{eq:finite-rank-distance-definition}
\end{align}
When \(q<1\), the first quantity is customarily interpreted as an essential
quasi-norm. We retain the standard symbol \(\norm{T}_{\mathrm e}\).
The linearity of \(\mathcal K_2\) and
\eqref{eq:operator-theta-triangle} give the quotient inequality
\[
\norm{S+T}_{\mathrm e}^{\vartheta}
\leq
\norm{S}_{\mathrm e}^{\vartheta}
+\norm{T}_{\mathrm e}^{\vartheta},
\qquad S,T\in\mathcal B_2(X_1,X_2;Y).
\]
Thus this is the ordinary quotient-norm inequality for \(q\geq1\) and the
powered quotient quasi-norm inequality for \(0<q<1\).

For a bounded subset \(A\subset Y\), define the Hausdorff measure of
noncompactness with the finite-net quasi-norm-radius normalisation by
\begin{equation}
\chi_Y(A)
:=
\inf\left\{
\varepsilon>0:
A\subset\bigcup_{j=1}^{N}B_Y(y_j,\varepsilon)
\text{ for some }y_1,\ldots,y_N\in Y
\right\},
\label{eq:Hausdorff-mnc-definition}
\end{equation}
where
\[
B_Y(y,\varepsilon):=\{z\in Y:\norm{z-y}_Y<\varepsilon\}.
\]
Since \((Y,\rho_Y)\) is complete, \(\chi_Y(A)=0\) if and only if \(A\)
has compact closure.  If the metric-radius normalisation associated with
\(\rho_Y\) is used instead, its value is exactly
\(\chi_Y(A)^{\vartheta}\).  This records explicitly the normalisation under
which the essential (quasi-)norm identity below is stated.

For \(n\geq1\), the \(n\)-th bilinear approximation number is
\begin{equation}
a_n^{(2)}(T)
:=
\inf\left\{
\norm{T-F}:
F\in\mathcal F_2(X_1,X_2;Y),\
\operatorname{rank}F<n
\right\},
\label{eq:bilinear-approximation-number}
\end{equation}
where \(\operatorname{rank}F\) is the dimension of the linear span of the
range of \(F\).  For Banach-valued multilinear maps, this is the
approximation-number convention of \cite{FernandezMastyloDaSilva2013}, with
rank defined as the dimension of the linear span of the image.  We adopt the
same formula definitionally for \(0<q<1\).  The arguments below use only the
defining rank infima, their monotonicity, and the fact that their infimum over
\(n\) is the finite-rank distance.  These definitions fix all normalisations
used below.

For comparison with arguments carried out directly in the metric, define
\begin{align*}
e_{\vartheta}(T)
&:=\inf_{K\in\mathcal K_2(X_1,X_2;Y)}
\rho_{\mathcal B}(T,K),\\
d_{\mathrm{fr},\vartheta}(T)
&:=\inf_{F\in\mathcal F_2(X_1,X_2;Y)}
\rho_{\mathcal B}(T,F),\\
a_{n,\vartheta}^{(2)}(T)
&:=\inf_{\substack{F\in\mathcal F_2(X_1,X_2;Y)\\
\operatorname{rank}F<n}}
\rho_{\mathcal B}(T,F).
\end{align*}
Also let \(\chi_{\rho_Y}(A)\) be the Hausdorff measure of noncompactness
defined using open \(\rho_Y\)-balls and arbitrary centres in \(Y\).

\begin{lemma}[Power-normalisation identities]
\label{lem:power-normalisation}
For every bounded bilinear map \(T\), every bounded \(A\subset Y\), and every
\(n\geq1\),
\begin{equation}
\begin{aligned}
e_{\vartheta}(T)&=\norm{T}_{\mathrm e}^{\vartheta},&
d_{\mathrm{fr},\vartheta}(T)
&=\operatorname{dist}_{\mathrm{fr}}(T)^{\vartheta},\\
\chi_{\rho_Y}(A)&=\chi_Y(A)^{\vartheta},&
a_{n,\vartheta}^{(2)}(T)&=a_n^{(2)}(T)^{\vartheta}.
\end{aligned}
\label{eq:power-normalisation-identities}
\end{equation}
\end{lemma}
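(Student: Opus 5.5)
The whole lemma rests on one observation: the map \(\phi(t)=t^{\vartheta}\) is a continuous, strictly increasing bijection of \([0,\infty)\) onto itself. Such a map commutes with infima over any nonempty set of nonnegative reals, \(\inf_{s\in S}\phi(s)=\phi(\inf S)\). Monotonicity gives ``\(\geq\)'', and continuity together with a minimising sequence gives ``\(\leq\)''. The plan is to express each powered quantity as the infimum, over the same index set, of \(\phi\) applied to the corresponding unpowered quantity, and then invoke this commutation.

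For the three operator quantities, the definition \eqref{eq:operator-powered-metric} gives \(\rho_{\mathcal B}(T,K)=\norm{T-K}^{\vartheta}=\phi(\norm{T-K})\) for every \(K\). Hence
\[
e_\vartheta(T)=\inf_{K\in\mathcal K_2}\phi(\norm{T-K})=\phi\Bigl(\inf_{K\in\mathcal K_2}\norm{T-K}\Bigr)=\norm{T}_{\mathrm e}^{\vartheta}.
\]
The same computation works with \(\mathcal K_2\) replaced by \(\mathcal F_2\), and with \(\mathcal F_2\) replaced by \(\{F\in\mathcal F_2:\operatorname{rank}F<n\}\). Each index set is nonempty because \(0\) belongs to it, and \(0\) has rank \(0<n\). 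All the quantities involved are finite, since \(T\) is bounded. This yields the identities for \(e_\vartheta\), \(d_{\mathrm{fr},\vartheta}\) and \(a^{(2)}_{n,\vartheta}\).

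For the Hausdorff measure, I compare the two families of balls directly. For \(y\in Y\) and \(\varepsilon>0\), the sets \(\{z:\rho_Y(z,y)<\varepsilon^{\vartheta}\}\) and \(\{z:\norm{z-y}_Y<\varepsilon\}\) coincide, because \(\phi\) is strictly increasing. Thus \(A\) is covered by finitely many quasi-norm balls of radius \(\varepsilon\) if and only if it is covered by \(\rho_Y\)-balls of radius \(\varepsilon^\vartheta\) with the same centres. Let \(E\) denote the set of admissible quasi-norm radii in \eqref{eq:Hausdorff-mnc-definition}. Then the admissible metric radii form exactly the set \(\phi(E)\), since \(\phi\) maps \((0,\infty)\) bijectively onto itself. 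The set \(E\) is nonempty because \(A\) is bounded: a single ball of large radius about \(0\) covers \(A\). The commutation principle then gives \(\chi_{\rho_Y}(A)=\inf\phi(E)=\phi(\inf E)=\chi_Y(A)^{\vartheta}\).

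I do not expect a genuine obstacle. The only point needing care is the infimum-commutation step, specifically that continuity of \(\phi\) at \(0\) covers the case where the infimum is zero. The other point is to check that strict inequalities in the ball definitions are preserved, so that the admissible radius sets correspond exactly and not merely up to their boundaries. When \(q\geq1\) we have \(\vartheta=1\) and every statement is trivial, so the content lies in the case \(0<q<1\), and the argument above is uniform in \(q\).
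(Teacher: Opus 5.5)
Your proposal is correct and follows essentially the same route as the paper. Both arguments let the continuous, strictly increasing map \(r\mapsto r^{\vartheta}\) commute with nonempty infima, obtain nonemptiness from the zero map and from one large ball about the origin, and derive the Hausdorff-measure identity from the ball identity \(B_{\rho_Y}(y,r^{\vartheta})=B_Y(y,r)\).
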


\begin{proof}
Each identity follows because \(r\mapsto r^{\vartheta}\) is continuous and
strictly increasing on \([0,\infty)\), and therefore commutes with each
nonempty infimum occurring here.  The zero map makes the operator infima
nonempty.  Since \(A\) is bounded, one sufficiently large ball centred at
zero makes the finite-cover class nonempty.  The ball identity
\[
B_{\rho_Y}(y,r^{\vartheta})=B_Y(y,r)
\]
gives the third equality with the precise finite-net convention in
\eqref{eq:Hausdorff-mnc-definition}.
\end{proof}

\section{Tail operators and localisation framework}
\label{sec:localisation}

For $a<c<d<b$, define
\[
\mathcal H_{2,c}^{-}(f,g)
:=
\chi_{(a,c)}\mathcal H_2(f,g),
\]
\[
\mathcal H_{2,d}^{+}(f,g)
:=
\chi_{(d,b)}\mathcal H_2(f,g),
\]
and
\[
\mathcal H_2^{c,d}(f,g)
:=
\chi_{(c,d)}\mathcal H_2(f,g).
\]
It is convenient to write
\[
Q_{c,d}h:=\chi_{(a,c)\cup(d,b)}h,
\qquad h\in Y.
\]
Whenever $\mathcal H_2:X_1\times X_2\to Y$ is bounded, put
\[
\alpha(c)
:=
\norm{\mathcal H_{2,c}^{-}}_{X_1\times X_2\to Y},
\qquad
\beta(d)
:=
\norm{\mathcal H_{2,d}^{+}}_{X_1\times X_2\to Y}.
\]
We also define the combined-tail norm
\begin{equation}
\gamma(c,d)
:=
\norm{Q_{c,d}\mathcal H_2}_{X_1\times X_2\to Y}
=
\norm{\mathcal H_2-\mathcal H_2^{c,d}}_{X_1\times X_2\to Y}.
\label{eq:combined-tail-norm}
\end{equation}
The equality is understood in \(Y\). Possible discrepancies at \(c\) and
\(d\) occur on null sets.  The family \(\gamma(c,d)\) decreases as the
middle window expands.  Hence the directed limit
\begin{equation}
\tau(\mathcal H_2)
:=
\lim_{\substack{c\downarrow a\\d\uparrow b}}\gamma(c,d)
=
\inf_{a<c<d<b}\gamma(c,d)
\label{eq:tail-limit-tau}
\end{equation}
exists in \([0,\infty)\).
Indeed, boundedness of \(\mathcal H_2\) gives the explicit finiteness bound
\begin{equation}
0\leq\gamma(c,d)\leq\norm{\mathcal H_2}<\infty
\qquad (a<c<d<b).
\label{eq:finite-window-tail-radius}
\end{equation}
This finiteness is needed when strict finite-net radii are transported from a
middle image to the full image.  At the powered metric scale, put
\begin{equation}
\tau_{\vartheta}(\mathcal H_2)
:=
\inf_{a<c<d<b}\gamma(c,d)^{\vartheta}
=
\tau(\mathcal H_2)^{\vartheta}.
\label{eq:powered-tail-limit}
\end{equation}
The last equality follows from the same monotone-continuity argument as in
Lemma~\ref{lem:power-normalisation}.
Here and below, the displayed operator quasi-norm is the one fixed in
Section~\ref{sec:preliminaries}.  In particular, for \(0<q<1\) every
directed limit and operator convergence statement is taken in the complete
metric induced by its \(q\)-th power.

The endpoint limits are interpreted according to the geometry of \(I\).
For finite endpoints they are one-sided limits from inside \(I\). For
infinite endpoints they are limits at \(\pm\infty\).

\section{Compactness of middle truncations}
\label{sec:middle-compactness}

\begin{lemma}[Local weighted Hardy estimates]
\label{lem:local-Hardy-estimates}
Let $1<p<\infty$, let $v$ be a measurable weight on $I$, positive and finite
almost everywhere, and suppose that
\[
\int_a^x v(t)^{1-p'}\,dt<\infty
\]
for every $x\in I$. Then, for every $f\in L^p(v;I)$,
\begin{equation}
|Hf(x)|
\leq
\norm{f}_{L^p(v;I)}
\left(\int_a^x v(t)^{1-p'}\,dt\right)^{1/p'},
\qquad x\in I.
\label{eq:pointwise-Hardy-estimate}
\end{equation}
Moreover, whenever $a<x<y<b$,
\begin{equation}
|Hf(y)-Hf(x)|
\leq
\norm{f}_{L^p(v;I)}
\left(\int_x^y v(t)^{1-p'}\,dt\right)^{1/p'}.
\label{eq:increment-Hardy-estimate}
\end{equation}
\end{lemma}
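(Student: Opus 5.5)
The plan is a direct application of Hölder's inequality, after first checking that \(Hf(x)\) is well defined for every \(x\in I\).

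\emph{Factorisation.} Fix \(f\in L^p(v;I)\) and \(x\in I\). Since \(v\) is positive and finite almost everywhere, on \((a,x)\) we may write almost everywhere
\[
|f(t)|=|f(t)|\,v(t)^{1/p}\cdot v(t)^{-1/p}.
\]
Apply Hölder's inequality with the exponents \(p\) and \(p'\). The first factor gives
\[
\Bigl(\int_a^x|f|^pv\Bigr)^{1/p}\leq\norm{f}_{L^p(v;I)}.
\]
For the second factor, the identity \(-p'/p=1-p'\) gives
\[
\Bigl(\int_a^x v^{-p'/p}\Bigr)^{1/p'}=\Bigl(\int_a^x v^{1-p'}\Bigr)^{1/p'}.
\]

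\emph{Well-definedness.} The right-hand side of this Hölder bound is finite by the hypothesis \(\int_a^x v^{1-p'}<\infty\). Hence \(f\in L^1(a,x)\) for every \(x\in I\), and \(Hf(x)\) is a well-defined absolutely convergent integral. This also settles the case of an infinite left endpoint \(a=-\infty\).

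\emph{Pointwise estimate.} The bound \(|Hf(x)|\leq\int_a^x|f|\) combined with the Hölder estimate above gives \eqref{eq:pointwise-Hardy-estimate}.

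\emph{Increment estimate.} For \(a<x<y<b\), both \(Hf(x)\) and \(Hf(y)\) are finite, so additivity of the Lebesgue integral yields
\[
Hf(y)-Hf(x)=\int_x^y f.
\]
Repeat the same Hölder splitting on \((x,y)\). Bound \(\int_x^y|f|^pv\) by \(\norm{f}_{L^p(v;I)}^p\), and keep the weight factor \(\bigl(\int_x^y v^{1-p'}\bigr)^{1/p'}\). This factor is finite because it is dominated by the corresponding integral over \((a,y)\). The result is \eqref{eq:increment-Hardy-estimate}.

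There is no real obstacle. The only point needing care is the initial verification that \(f\) is locally integrable up to the endpoint \(a\). This justifies both the definition of \(Hf\) and the subtraction of the two finite integrals. It comes from the same Hölder bound, so it should be recorded before the two estimates.
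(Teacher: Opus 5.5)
Your proposal is correct and follows the paper's proof essentially verbatim: the same factorisation $|f|=|f|v^{1/p}v^{-1/p}$, weighted H\"older on $(a,x)$ and then on $(x,y)$, the identity $-p'/p=1-p'$, and bounding the local $L^p(v)$ piece by the full norm. Your explicit check that the H\"older bound gives $f\in L^1(a,x)$, which justifies $Hf(y)-Hf(x)=\int_x^y f$, is a small point the paper records in its setup section rather than inside the proof. The paper, for its part, disposes of the null set where $v\in\{0,\infty\}$ by redefining $v$ there.
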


\begin{proof}
Let
\[
N_v:=\{t\in I:v(t)=0\text{ or }v(t)=\infty\}.
\]
By hypothesis, $N_v$ is Lebesgue-null. Replacing $v$ by $1$ on $N_v$ does
not change the weighted $L^p$ norm or any integral of $v^{1-p'}$, since all
changes occur on a null set. We may therefore use, almost everywhere, the
factorisation
\[
|f(t)|=|f(t)|v(t)^{1/p}v(t)^{-1/p}.
\]
Let $x\in I$. By weighted H\"older's inequality,
\begin{align*}
|Hf(x)|
&\leq \int_a^x |f(t)|\,dt\\
&=\int_a^x |f(t)|v(t)^{1/p}v(t)^{-1/p}\,dt\\
&\leq
\left(\int_a^x |f(t)|^p v(t)\,dt\right)^{1/p}
\left(\int_a^x v(t)^{-p'/p}\,dt\right)^{1/p'}.
\end{align*}
Since $-p'/p=1-p'$, estimate \eqref{eq:pointwise-Hardy-estimate} follows.

For $a<x<y<b$,
\[
Hf(y)-Hf(x)=\int_x^y f(t)\,dt.
\]
Applying the same weighted H\"older inequality on $(x,y)$ gives
\begin{align*}
|Hf(y)-Hf(x)|
&\leq
\left(\int_x^y |f(t)|^p v(t)\,dt\right)^{1/p}
\left(\int_x^y v(t)^{1-p'}\,dt\right)^{1/p'}\\
&\leq
\norm{f}_{L^p(v;I)}
\left(\int_x^y v(t)^{1-p'}\,dt\right)^{1/p'}.
\end{align*}
This proves \eqref{eq:increment-Hardy-estimate}.
\end{proof}

\begin{lemma}[Compactness of the middle-truncated operator]
\label{lem:middle-truncation-compact}
For every $a<c<d<b$, the operator
\[
\mathcal H_2^{c,d}:X_1\times X_2\longrightarrow Y
\]
is compact.
\end{lemma}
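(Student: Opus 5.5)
The plan is to realise \(\mathcal H_2^{c,d}\) as a limit, in the powered operator metric \(\rho_{\mathcal B}\), of finite-rank bilinear maps. Since compact bilinear maps form a \(\rho_{\mathcal B}\)-closed class (Lemma~\ref{lem:closure-compact-bilinear}) and finite-rank bounded maps are compact, this suffices. Everything takes place on the fixed window \([c,d]\). By \eqref{eq:local-weight-condition}, \(V_i(d)<\infty\). By the hypothesis \(u\in L^1_{\mathrm{loc}}(I)\), \(U:=\int_c^d u<\infty\), since \([c,d]\) is a compact subset of \(I\).

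For a partition \(c=x_0<x_1<\dots<x_m=d\), define the sampler
\[
F_m(f,g):=\sum_{k=1}^{m}\chi_{(x_{k-1},x_k]}\,Hf(x_{k-1})\,Hg(x_{k-1}).
\]
This map is bilinear, and its range lies in the span of the \(m\) indicators, so its rank is at most \(m\). It is bounded, because \eqref{eq:pointwise-Hardy-estimate} gives \(|Hf(x_{k-1})|\le\norm{f}_{X_1}V_1(d)^{1/p_1'}\), and similarly for \(g\).

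For \(x\in(x_{k-1},x_k]\) I write
\[
Hf(x)Hg(x)-Hf(x_{k-1})Hg(x_{k-1})=(Hf(x)-Hf(x_{k-1}))Hg(x)+Hf(x_{k-1})(Hg(x)-Hg(x_{k-1})).
\]
Applying \eqref{eq:increment-Hardy-estimate} to the increments and \eqref{eq:pointwise-Hardy-estimate} to the remaining factors, on the product of unit balls I get
\[
|\mathcal H_2^{c,d}(f,g)(x)-F_m(f,g)(x)|\le \delta_m:=\max_k\Bigl[\omega_1(k)^{1/p_1'}V_2(d)^{1/p_2'}+V_1(d)^{1/p_1'}\omega_2(k)^{1/p_2'}\Bigr],
\]
where \(\omega_i(k)=\int_{x_{k-1}}^{x_k}v_i^{1-p_i'}\). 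Then \(\norm{\mathcal H_2^{c,d}-F_m}\le \delta_m U^{1/q}\).

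It remains to choose partitions with \(\delta_m\to0\). The functions \(t\mapsto V_i(t)\) are continuous and nondecreasing on \([c,d]\), because \(v_i^{1-p_i'}\in L^1(c,d)\). Hence they are uniformly continuous there, and for a partition whose mesh tends to zero, \(\max_k\omega_i(k)\to0\). One can equally take the common refinement of equal-\(V_1\) and equal-\(V_2\) partitions, which gives explicit rates. Therefore \(\rho_{\mathcal B}(\mathcal H_2^{c,d},F_m)\to0\), and closure of \(\mathcal K_2\) yields compactness.

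The one point needing care is the quasi-Banach case \(q<1\). There the conclusion must rest on the closure lemma, which is proved with the \(\vartheta\)-triangle inequality, and on compactness of finite-rank bounded maps in a nonlocally convex space. For the latter, the image of the unit ball is a bounded subset of a finite-dimensional subspace of \(Y\), and finite-dimensional Hausdorff topological vector spaces carry the Euclidean topology, so this set is relatively compact. Beyond that, the main obstacle is just making the uniform estimate independent of \((f,g)\) in the unit balls, which is exactly what the Hölder bounds provide.
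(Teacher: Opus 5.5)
Your proposal is correct and follows the paper's proof essentially step for step: the same left-endpoint piecewise-constant sampler $Hf(x_{k-1})Hg(x_{k-1})$ on a partition of $[c,d]$, the same splitting of the product difference controlled by the H\"older pointwise and increment estimates, the same error bound $U_{c,d}^{1/q}(M_1\omega_2+M_2\omega_1)$ vanishing as the mesh shrinks, and compactness via the closure lemma. The only differences are cosmetic: your half-open cells $(x_{k-1},x_k]$ differ from the paper's at the single $u(x)\,dx$-null point $d$, and you invoke the general fact that finite-dimensional Hausdorff topological vector spaces carry the Euclidean topology, whereas the paper proves Lemma~\ref{lem:finite-dimensional-range} directly.
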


\begin{proof}
Fix $a<c<d<b$. Since $u\in L^1_{\mathrm{loc}}(I)$,
\[
U_{c,d}:=\int_c^d u(x)\,dx<\infty.
\]
For $i\in\{1,2\}$, define
\[
M_i:=V_i(d)^{1/p_i'}.
\]
By Lemma~\ref{lem:local-Hardy-estimates}, if
$\norm{f}_{X_1}\leq1$ and $\norm{g}_{X_2}\leq1$, then
\begin{equation}
|Hf(x)|\leq M_1,
\qquad
|Hg(x)|\leq M_2,
\qquad x\in[c,d].
\label{eq:uniform-middle-bound}
\end{equation}

For $\delta>0$, let
\[
\omega_i(\delta)
:=
\sup\left\{
\left(\int_x^y v_i(t)^{1-p_i'}\,dt\right)^{1/p_i'}:
 c\leq x\leq y\leq d,\ y-x\leq\delta
\right\}.
\]
Because $v_i^{1-p_i'}\in L^1(c,d)$, absolute continuity of the Lebesgue
integral gives
\begin{equation}
\lim_{\delta\downarrow0}\omega_i(\delta)=0,
\qquad i=1,2.
\label{eq:modulus-vanishing}
\end{equation}

Let
\[
\mathcal P:\quad c=x_0<x_1<\cdots<x_m=d
\]
be a finite partition and write
\[
|\mathcal P|:=\max_{0\leq j<m}(x_{j+1}-x_j).
\]
Define
\begin{equation}
\mathcal F_{\mathcal P}(f,g)(x)
:=
\sum_{j=0}^{m-1}Hf(x_j)Hg(x_j)
\chi_{[x_j,x_{j+1})}(x).
\label{eq:finite-rank-map}
\end{equation}
The range of $\mathcal F_{\mathcal P}$ lies in the finite-dimensional space
\[
E_{\mathcal P}
:=
\operatorname{span}\{\chi_{[x_j,x_{j+1})}:0\leq j<m\}.
\]
Each evaluation map $f\mapsto Hf(x_j)$ is bounded by
Lemma~\ref{lem:local-Hardy-estimates}. Hence $\mathcal F_{\mathcal P}$ is a
bounded finite-rank bilinear operator.  It is compact for every \(q>0\) by
Lemma~\ref{lem:finite-dimensional-range}. Local convexity of \(Y\) is not
needed.

The possible discrepancy between the chosen representatives of
$\mathcal H_2^{c,d}$ and $\mathcal F_{\mathcal P}$ at $c$ is supported on a
singleton and is therefore null for the measure $u(x)\,dx$.  It is thus
enough to estimate points $x\in(c,d)\cap[x_j,x_{j+1})$. For unit vectors
$f\in X_1$ and $g\in X_2$,
\begin{align*}
&|Hf(x)Hg(x)-Hf(x_j)Hg(x_j)|\\
&\quad\leq
|Hf(x)|\,|Hg(x)-Hg(x_j)|
+
|Hg(x_j)|\,|Hf(x)-Hf(x_j)|\\
&\quad\leq
M_1\omega_2(|\mathcal P|)+M_2\omega_1(|\mathcal P|),
\end{align*}
where we used \eqref{eq:uniform-middle-bound} and
\eqref{eq:increment-Hardy-estimate}. It follows that
\begin{align}
&\norm{\mathcal H_2^{c,d}-\mathcal F_{\mathcal P}}_{X_1\times X_2\to Y}
\notag\\
&\quad\leq
U_{c,d}^{1/q}
\bigl(M_1\omega_2(|\mathcal P|)+M_2\omega_1(|\mathcal P|)\bigr).
\label{eq:finite-rank-approximation}
\end{align}
By \eqref{eq:modulus-vanishing}, the right-hand side tends to zero as
$|\mathcal P|\to0$. Thus $\mathcal H_2^{c,d}$ is a
\(\rho_{\mathcal B}\)-limit of bounded finite-rank bilinear operators.  The
limit is taken in the complete space from
Lemma~\ref{lem:bounded-bilinear-completeness}.  Lemma
\ref{lem:closure-compact-bilinear} therefore shows that
\(\mathcal H_2^{c,d}\) is compact.
\end{proof}

The same construction gives a quantitative estimate.  For a partition
\(\mathcal P\) as above, set
\begin{equation}
E_{c,d}(\mathcal P)
:=
U_{c,d}^{1/q}
\bigl(M_1\omega_2(|\mathcal P|)+M_2\omega_1(|\mathcal P|)\bigr).
\label{eq:middle-approximation-error}
\end{equation}

\begin{proposition}[Quantitative finite-rank approximation]
\label{prop:quantitative-finite-rank}
Let \(a<c<d<b\), and let \(\mathcal P\) contain \(m\) subintervals.
Then \(\operatorname{rank}\mathcal F_{\mathcal P}\leq m\), and
\begin{equation}
\norm{\mathcal H_2^{c,d}-\mathcal F_{\mathcal P}}
\leq E_{c,d}(\mathcal P).
\label{eq:middle-rank-m-error}
\end{equation}
Consequently, for every \(\varepsilon>0\) there are a partition
\(\mathcal P\) and a bounded finite-rank bilinear map
\(\mathcal F_{\mathcal P}\) such that
\begin{equation}
\norm{\mathcal H_2^{c,d}-\mathcal F_{\mathcal P}}<\varepsilon.
\label{eq:middle-finite-rank-density}
\end{equation}
If, in addition, \(\mathcal H_2:X_1\times X_2\to Y\) is bounded, then for
the full operator
\begin{equation}
a_{m+1}^{(2)}(\mathcal H_2)
\leq
\left(
\gamma(c,d)^q+E_{c,d}(\mathcal P)^q
\right)^{1/q}.
\label{eq:approximation-number-bound}
\end{equation}
In particular, if \(\mathcal P_m\) is the uniform partition of \([c,d]\)
into \(m\) subintervals, then
\begin{align}
a_{m+1}^{(2)}(\mathcal H_2)
&\leq
\Biggl[
\gamma(c,d)^q
+U_{c,d}
\biggl(
M_1\omega_2\!\left(\frac{d-c}{m}\right)
+M_2\omega_1\!\left(\frac{d-c}{m}\right)
\biggr)^q
\Biggr]^{1/q}.
\label{eq:uniform-partition-approximation-bound}
\end{align}
\end{proposition}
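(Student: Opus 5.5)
The plan is to reuse the construction from the proof of Lemma~\ref{lem:middle-truncation-compact} and then add one disjoint-support step for the full operator.

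\emph{Rank bound.} By \eqref{eq:finite-rank-map}, the range of \(\mathcal F_{\mathcal P}\) lies in \(E_{\mathcal P}\), which is spanned by the \(m\) indicators \(\chi_{[x_j,x_{j+1})}\). Hence \(\operatorname{rank}\mathcal F_{\mathcal P}\leq m\).

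\emph{Estimate \eqref{eq:middle-rank-m-error}.} Fix unit vectors \(f,g\). The pointwise estimate obtained in that proof gives, for \(\mu_u\)-a.e.\ \(x\in(c,d)\),
\[
|\mathcal H_2^{c,d}(f,g)(x)-\mathcal F_{\mathcal P}(f,g)(x)|\leq M_1\omega_2(|\mathcal P|)+M_2\omega_1(|\mathcal P|).
\]
Both maps vanish off \([c,d)\), and the point \(c\) is \(\mu_u\)-null. Raising to the power \(q\) and integrating against \(u\) over \((c,d)\) produces the factor \(U_{c,d}\). Taking the \(1/q\)-th root and the supremum over the product unit ball yields \eqref{eq:middle-rank-m-error}; this is exactly \eqref{eq:finite-rank-approximation}.

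\emph{Density \eqref{eq:middle-finite-rank-density}.} This follows from \eqref{eq:modulus-vanishing}: choose a partition with mesh \(|\mathcal P|\) small enough that \(E_{c,d}(\mathcal P)<\varepsilon\). Such partitions exist, for instance the uniform ones.

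\emph{Bound \eqref{eq:approximation-number-bound}.} Assume \(\mathcal H_2\) is bounded, and split
\[
\mathcal H_2-\mathcal F_{\mathcal P}=Q_{c,d}\mathcal H_2+\bigl(\mathcal H_2^{c,d}-\mathcal F_{\mathcal P}\bigr).
\]
Fix unit vectors \(f,g\). The first summand is supported in \((a,c)\cup(d,b)\) and the second in \([c,d)\). These supports are disjoint up to the null set \(\{c\}\), so the exact \(q\)-additivity \eqref{eq:disjoint-q-additivity} gives
\[
\norm{(\mathcal H_2-\mathcal F_{\mathcal P})(f,g)}_Y^q
=\norm{Q_{c,d}\mathcal H_2(f,g)}_Y^q+\norm{(\mathcal H_2^{c,d}-\mathcal F_{\mathcal P})(f,g)}_Y^q
\leq\gamma(c,d)^q+E_{c,d}(\mathcal P)^q,
\]
where the last step uses \eqref{eq:combined-tail-norm} and \eqref{eq:middle-rank-m-error}. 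Taking the supremum over the unit balls bounds \(\norm{\mathcal H_2-\mathcal F_{\mathcal P}}\) by the right-hand side of \eqref{eq:approximation-number-bound}. Since \(\mathcal F_{\mathcal P}\) is bounded, finite-rank, and has rank \(\leq m<m+1\), it is admissible in the infimum \eqref{eq:bilinear-approximation-number} defining \(a_{m+1}^{(2)}(\mathcal H_2)\), and \eqref{eq:approximation-number-bound} follows.

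Working with \(q\)-additivity instead of the \(\vartheta\)-triangle inequality is what gives the sharp \(\ell^q\) combination for every \(0<q<\infty\). The \(\vartheta\)-triangle inequality would only give a weaker combination.

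\emph{Uniform partitions \eqref{eq:uniform-partition-approximation-bound}.} For the uniform partition \(\mathcal P_m\) we have \(|\mathcal P_m|=(d-c)/m\). Substituting this into \eqref{eq:middle-approximation-error}, we get \(E_{c,d}(\mathcal P_m)^q=U_{c,d}\bigl(M_1\omega_2((d-c)/m)+M_2\omega_1((d-c)/m)\bigr)^q\), and inserting this into \eqref{eq:approximation-number-bound} gives the stated bound.

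The only step needing care is the disjoint-support splitting. One must check that \(Q_{c,d}\mathcal H_2\) and \(\mathcal H_2^{c,d}-\mathcal F_{\mathcal P}\) really have \(\mu_u\)-a.e.\ disjoint supports. This holds because \(\mathcal F_{\mathcal P}\) is supported in \([c,d)\) and the endpoint discrepancies at \(c\) and \(d\) are null sets. Everything else is bookkeeping from the earlier proof.
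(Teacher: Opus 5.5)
Your proof is correct and follows the paper's argument step for step. The rank bound comes from the \(m\) indicators spanning \(E_{\mathcal P}\), the error estimate from integrating the pointwise sampler bound over \((c,d)\), and density from \(\omega_i(\delta)\to0\). The approximation-number bound uses exact disjoint-support \(q\)-additivity of \(Q_{c,d}\mathcal H_2(f,g)\) and \((\mathcal H_2^{c,d}-\mathcal F_{\mathcal P})(f,g)\), with \(\operatorname{rank}\mathcal F_{\mathcal P}\le m<m+1\) making \(\mathcal F_{\mathcal P}\) admissible for \(a_{m+1}^{(2)}\).

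One small correction to your side remark: for \(0<q<1\) one has \(\vartheta=q\), so the \(\vartheta\)-triangle inequality already gives exactly \(\bigl(\gamma(c,d)^q+E_{c,d}(\mathcal P)^q\bigr)^{1/q}\). The \(q\)-additivity is needed to get this sharper form, rather than \(\gamma(c,d)+E_{c,d}(\mathcal P)\), only when \(q>1\).
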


\begin{proof}
The range of \(\mathcal F_{\mathcal P}\) is contained in the span of the
\(m\) interval indicators used in \eqref{eq:finite-rank-map}, so its rank is
at most \(m\).  Estimate \eqref{eq:middle-rank-m-error} is
\eqref{eq:finite-rank-approximation}.  Since
\(\omega_i(\delta)\to0\), a sufficiently fine partition gives
\eqref{eq:middle-finite-rank-density}.  Assume now, for the full-operator
assertions, that \(\mathcal H_2\) is bounded.  The two functions
\[
Q_{c,d}\mathcal H_2(f,g)
\quad\text{and}\quad
\mathcal H_2^{c,d}(f,g)-\mathcal F_{\mathcal P}(f,g)
\]
have disjoint supports up to null sets.  Therefore, for unit vectors \(f\)
and \(g\),
\begin{align*}
\norm{\mathcal H_2(f,g)-\mathcal F_{\mathcal P}(f,g)}_Y^q
&=
\norm{Q_{c,d}\mathcal H_2(f,g)}_Y^q\\
&\quad+
\norm{\mathcal H_2^{c,d}(f,g)
-\mathcal F_{\mathcal P}(f,g)}_Y^q\\
&\leq
\gamma(c,d)^q+E_{c,d}(\mathcal P)^q,
\end{align*}
where \eqref{eq:finite-rank-approximation} was used in the last line.
Taking the supremum over the product unit ball and then using
\eqref{eq:bilinear-approximation-number} proves
\eqref{eq:approximation-number-bound}.  The uniform-partition estimate is the
special case \(|\mathcal P_m|=(d-c)/m\).
\end{proof}

\begin{remark}
Under the additional boundedness hypothesis on \(\mathcal H_2\), if
\(v_i^{1-p_i'}\leq L_i\) almost everywhere on \((c,d)\), then
\[
\omega_i(h)\leq L_i^{1/p_i'}h^{1/p_i'}.
\]
Consequently,
\begin{align*}
a_{m+1}^{(2)}(\mathcal H_2)
&\leq
\Biggl[
\gamma(c,d)^q
+U_{c,d}
\biggl(
M_1L_2^{1/p_2'}\left(\frac{d-c}{m}\right)^{1/p_2'}\\
&\hspace{42mm}
+M_2L_1^{1/p_1'}\left(\frac{d-c}{m}\right)^{1/p_1'}
\biggr)^q
\Biggr]^{1/q}.
\end{align*}
For a fixed middle window, the second term therefore decays at least as
\(m^{-\min\{1/p_1',1/p_2'\}}\).  The remaining term \(\gamma(c,d)\)
records the unavoidable contribution from the two endpoints.
\end{remark}

\begin{remark}[Why the estimate survives below \(q=1\)]
\label{rem:quantitative-q-less-than-one}
No Minkowski inequality is used in
\eqref{eq:approximation-number-bound}.  Its two error terms are supported,
respectively, outside and inside \((c,d)\), so
\eqref{eq:disjoint-q-additivity} gives the exact \(q\)-power decomposition
used in the proof of \eqref{eq:approximation-number-bound} for every
\(q>0\).  In particular, for
\(0<q<1\) the right-hand side retains the natural quasi-Banach form
\[
\left(\gamma(c,d)^q+E_{c,d}(\mathcal P)^q\right)^{1/q};
\]
replacing it by an ordinary sum would lose quantitative information.
\end{remark}

\begin{remark}[Relation to linear partition methods]
Endpoint/interior decompositions and partition approximations have linear
antecedents in \cite{EdmundsGurkaPick1994,EdmundsStepanov1994,
StepanovUshakova2024}.  The Hardy-specific refinement used here is the
bilinear sample \(Hf(x_j)Hg(x_j)\), together with a rank bound on its
span-valued range and disjoint-support \(q\)-addition for every \(q>0\).
\end{remark}

\section{The localisation theorem}
\label{sec:main-theorem}

The cutoff argument separates into an abstract compact-middle principle and
a finite-rank enhancement.  This separation isolates the part of the proof
that uses only the geometry of a finite-\(q\) target from the local Hardy
estimates proved in Section~\ref{sec:middle-compactness}.

Let \(E_1,E_2\) be normed spaces, put
\[
D_E:=B_{E_1}\times B_{E_2},
\]
and let \(T:E_1\times E_2\to Y\) be a bounded bilinear map.  In the notation
of Section~\ref{sec:localisation}, define
\[
T^{c,d}:=\chi_{(c,d)}T,
\qquad
\alpha_T(c):=\norm{\chi_{(a,c)}T},
\qquad
\beta_T(d):=\norm{\chi_{(d,b)}T},
\]
and
\begin{equation}
\gamma_T(c,d):=\norm{Q_{c,d}T},
\qquad
\tau(T):=\inf_{a<c<d<b}\gamma_T(c,d).
\label{eq:abstract-tail-limit}
\end{equation}
The cutoffs satisfy \(T=T^{c,d}+Q_{c,d}T\) in \(Y\). The cut points are
null for the measure \(u(x)\,dx\).  The definitions of compactness,
essential distance, Hausdorff measure of noncompactness, finite rank, and
their powered counterparts from Section~\ref{sec:preliminaries} apply
verbatim with \(E_i\) in place of \(X_i\).  For \(T=\mathcal H_2\), the
quantities in \eqref{eq:abstract-tail-limit} are exactly those introduced in
Section~\ref{sec:localisation}.

\subsection{The compact-middle tier}

\begin{theorem}[Abstract compact-middle localisation]
\label{thm:abstract-compact-middle-localisation}
Let \(E_1,E_2\) be normed spaces and let
\(T:E_1\times E_2\to L^q(u;I)\) be a bounded bilinear map, where
\(0<q<\infty\).  Suppose that \(T^{c,d}\) is compact for every
\(a<c<d<b\).  Then
\begin{equation}
\norm{T}_{\mathrm e}
=
\chi_Y\!\left(T(D_E)\right)
=
\tau(T).
\label{eq:abstract-essential-mnc-tail}
\end{equation}
Equivalently, with \(\vartheta=\min\{1,q\}\),
\begin{equation}
e_{\vartheta}(T)
=
\chi_{\rho_Y}\!\left(T(D_E)\right)
=
\tau(T)^{\vartheta}.
\label{eq:abstract-powered-essential-mnc-tail}
\end{equation}
Moreover,
\begin{equation}
T\text{ is compact}
\quad\Longleftrightarrow\quad
\tau(T)=0
\quad\Longleftrightarrow\quad
\lim_{c\downarrow a}\alpha_T(c)
=
\lim_{d\uparrow b}\beta_T(d)=0.
\label{eq:abstract-compactness-equivalence}
\end{equation}
If
\[
\alpha_{T,*}:=\lim_{c\downarrow a}\alpha_T(c),
\qquad
\beta_{T,*}:=\lim_{d\uparrow b}\beta_T(d),
\]
then
\begin{equation}
\max\{\alpha_{T,*},\beta_{T,*}\}
\leq
\tau(T)
\leq
\bigl(\alpha_{T,*}^q+\beta_{T,*}^q\bigr)^{1/q}.
\label{eq:abstract-tail-sandwich}
\end{equation}
\end{theorem}

\begin{proof}
We first record the common cutoff fact.  If
\(K:E_1\times E_2\to Y\) is compact, then
\(\overline{K(D_E)}\) is compact in \(Y\).  Along any endpoint exhaustion,
the tail indicators converge pointwise to zero, so
Lemma~\ref{lem:uniform-absolute-continuity} gives
\begin{equation}
\lim_{\substack{c\downarrow a\\d\uparrow b}}
\norm{Q_{c,d}K}=0.
\label{eq:compact-map-uniform-tail}
\end{equation}
Monotonicity of the tail supports promotes convergence along an exhaustion
to the displayed directed limit.

If \(T\) is compact, \eqref{eq:compact-map-uniform-tail} applied to
\(T\) gives \(\tau(T)=0\).  Conversely, if \(\tau(T)=0\), choose expanding
middle windows \((c_n,d_n)\) with
\(\gamma_T(c_n,d_n)\to0\).  Since each \(T^{c_n,d_n}\) is compact and
\[
\norm{T-T^{c_n,d_n}}=\gamma_T(c_n,d_n),
\]
Lemma~\ref{lem:closure-compact-bilinear} shows that \(T\) is compact.

For the essential distance, compactness of \(T^{c,d}\) gives
\[
\norm{T}_{\mathrm e}
\leq
\norm{T-T^{c,d}}
=
\gamma_T(c,d).
\]
Taking the infimum yields
\begin{equation}
\norm{T}_{\mathrm e}\leq\tau(T).
\label{eq:essential-upper-tail}
\end{equation}
Conversely,
let \(K:E_1\times E_2\to Y\) be compact.  Since \(Q_{c,d}\) is a
contraction and \(\vartheta=\min\{1,q\}\),
\begin{equation}
\gamma_T(c,d)^{\vartheta}
\leq
\norm{T-K}^{\vartheta}
+
\norm{Q_{c,d}K}^{\vartheta}.
\label{eq:compact-perturbation-quasi-tail}
\end{equation}
Use \eqref{eq:compact-map-uniform-tail}, then take the infimum over
compact \(K\), to obtain
\(\tau(T)\leq\norm{T}_{\mathrm e}\).  Hence
\begin{equation}
\norm{T}_{\mathrm e}=\tau(T).
\label{eq:abstract-essential-tail}
\end{equation}

We next compare the Hausdorff measure of noncompactness with the same tail.
Fix \(c,d\).  Since \(T^{c,d}(D_E)\) has compact closure, it has a finite
\(\varepsilon\)-net \(z_1,\ldots,z_N\) for every \(\varepsilon>0\).  The
\(\vartheta\)-triangle inequality shows that these centres form a finite
\[
\bigl(\gamma_T(c,d)^{\vartheta}
+\varepsilon^{\vartheta}\bigr)^{1/\vartheta}
\]
net for \(T(D_E)\).  Letting \(\varepsilon\downarrow0\) and then infimising
over \(c,d\) gives
\begin{equation}
\chi_Y\!\left(T(D_E)\right)\leq\tau(T).
\label{eq:mnc-upper-tail}
\end{equation}

For the reverse inequality, suppose \(y_1,\ldots,y_N\) is a finite
\(r\)-net for \(T(D_E)\).  For each \((x_1,x_2)\in D_E\), choose \(j\)
such that \(\norm{T(x_1,x_2)-y_j}_Y<r\).  Indicator contraction and the
\(\vartheta\)-triangle inequality give
\[
\norm{Q_{c,d}T(x_1,x_2)}_Y^{\vartheta}
\leq
r^{\vartheta}
+
\max_{1\leq j\leq N}\norm{Q_{c,d}y_j}_Y^{\vartheta}.
\]
The maximum tends to zero as the middle interval expands.  Taking the
supremum over \(D_E\), passing to the directed limit, and then infimising
over all finite nets gives
\begin{equation}
\tau(T)\leq\chi_Y\!\left(T(D_E)\right).
\label{eq:abstract-mnc-lower}
\end{equation}
Equations \eqref{eq:abstract-essential-tail},
\eqref{eq:mnc-upper-tail}, and \eqref{eq:abstract-mnc-lower} prove
\eqref{eq:abstract-essential-mnc-tail}.  Lemma~\ref{lem:power-normalisation}
then gives \eqref{eq:abstract-powered-essential-mnc-tail}, with no loss of a
constant when \(0<q<1\).

Finally, disjoint-support \(q\)-additivity gives, for every \(c,d\),
\[
\max\{\alpha_T(c),\beta_T(d)\}
\leq
\gamma_T(c,d)
\leq
\bigl(\alpha_T(c)^q+\beta_T(d)^q\bigr)^{1/q}.
\]
Passing to the endpoint limits proves \eqref{eq:abstract-tail-sandwich} and
the last equivalence in \eqref{eq:abstract-compactness-equivalence}.
\end{proof}

The abstract tier applies immediately to the present operator because the
interior local mechanism has already been established.

\begin{theorem}[Localisation for the weighted bilinear Hardy operator]
\label{thm:central-localisation}
Under the assumptions above, the following statements are equivalent.

\begin{enumerate}[label=\textnormal{(\roman*)},leftmargin=2.2em]
\item
The operator
\[
\mathcal H_2:X_1\times X_2\longrightarrow Y
\]
is compact.

\item
The operator $\mathcal H_2$ is bounded and
\begin{equation}
\lim_{c\downarrow a}\alpha(c)=0,
\label{eq:lower-tail-vanishing}
\end{equation}
\begin{equation}
\lim_{d\uparrow b}\beta(d)=0.
\label{eq:upper-tail-vanishing}
\end{equation}

\item
The operator $\mathcal H_2$ is bounded, and for every $a<c<d<b$ the operator
\[
\mathcal H_2^{c,d}:X_1\times X_2\longrightarrow Y
\]
is compact. Moreover,
\begin{equation}
\lim_{\substack{c\downarrow a\\d\uparrow b}}
\norm{\mathcal H_2-\mathcal H_2^{c,d}}_{X_1\times X_2\to Y}
=0.
\label{eq:middle-truncation-convergence}
\end{equation}
\end{enumerate}
Whenever \(\mathcal H_2\) is bounded, and hence in particular whenever the
equivalent conditions above hold, for every $a<c<d<b$,
\begin{equation}
\norm{\mathcal H_2-\mathcal H_2^{c,d}}_{X_1\times X_2\to Y}
\leq
\bigl(\alpha(c)^q+\beta(d)^q\bigr)^{1/q}.
\label{eq:truncation-error-estimate}
\end{equation}
\end{theorem}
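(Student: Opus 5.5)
The plan is to realise Theorem~\ref{thm:central-localisation} as a direct instance of Theorem~\ref{thm:abstract-compact-middle-localisation} with \(E_i=X_i\) and \(T=\mathcal H_2\). The hypotheses of the abstract theorem are that \(T\) is bounded and that every middle truncation \(T^{c,d}=\mathcal H_2^{c,d}\) is compact. The second hypothesis is exactly Lemma~\ref{lem:middle-truncation-compact}, which holds without any global boundedness assumption. The quantities \(\alpha_T,\beta_T,\gamma_T\) coincide with \(\alpha,\beta,\gamma\) of Section~\ref{sec:localisation}. The one point not supplied by the abstract theorem is that compactness in (i) must yield boundedness, since the abstract theorem assumes it.

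\textbf{Step 1: (i)\(\Rightarrow\) boundedness.} I will read the definition of compactness as applying to a map in \(\mathcal B_2\), so that boundedness is built in. Alternatively, I will note that a relatively compact subset of the metric space \((Y,\rho_Y)\) is bounded. Then \(\sup_{D}\norm{\mathcal H_2(f,g)}_Y<\infty\), which is precisely \(\norm{\mathcal H_2}<\infty\).

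\textbf{Step 2: the three equivalences.} Once \(\mathcal H_2\) is bounded, \eqref{eq:abstract-compactness-equivalence} gives
\[
\text{(i)}\iff\tau(\mathcal H_2)=0\iff\lim_{c\downarrow a}\alpha(c)=\lim_{d\uparrow b}\beta(d)=0,
\]
and this is (i)\(\iff\)(ii).

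For (iii), the compactness of the truncations is again Lemma~\ref{lem:middle-truncation-compact}. By \eqref{eq:combined-tail-norm}, \(\norm{\mathcal H_2-\mathcal H_2^{c,d}}=\gamma(c,d)\), and \(\gamma\) is monotone, so the directed limit in \eqref{eq:middle-truncation-convergence} equals \(\tau(\mathcal H_2)\) by \eqref{eq:tail-limit-tau}. Hence (iii) is equivalent to boundedness together with \(\tau=0\), which is equivalent to (i).

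\textbf{Step 3: the estimate \eqref{eq:truncation-error-estimate}.} For unit vectors \(f,g\), the function \(Q_{c,d}\mathcal H_2(f,g)\) splits as \(\chi_{(a,c)}\mathcal H_2(f,g)+\chi_{(d,b)}\mathcal H_2(f,g)\), and the two pieces have disjoint supports. Applying \eqref{eq:disjoint-q-additivity} gives
\[
\norm{Q_{c,d}\mathcal H_2(f,g)}_Y^q\le\alpha(c)^q+\beta(d)^q.
\]
Taking the supremum over the unit balls yields \eqref{eq:truncation-error-estimate}, valid for every \(q>0\) with no Minkowski inequality.

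\textbf{Main obstacle.} There is no real analytic difficulty left, since it was handled in Lemma~\ref{lem:middle-truncation-compact} and the abstract theorem. The only care needed is the bookkeeping in Step 1, namely that (i) supplies boundedness so the abstract theorem applies, and in checking that the directed limit in (iii) is the same \(\tau\).
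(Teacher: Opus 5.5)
Your proposal is correct and follows essentially the same route as the paper. Lemma~\ref{lem:middle-truncation-compact} supplies compact middle truncations, Theorem~\ref{thm:abstract-compact-middle-localisation} with \(T=\mathcal H_2\) gives the equivalences, and disjoint-support \(q\)-additivity gives \eqref{eq:truncation-error-estimate}; your Step~1, that a relatively compact image forces boundedness so the abstract theorem applies in the direction (i)\(\Rightarrow\)(ii), makes explicit a point the paper leaves implicit.
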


\begin{proof}
Lemma~\ref{lem:middle-truncation-compact} supplies compactness of every
\(\mathcal H_2^{c,d}\).  For a bounded \(\mathcal H_2\), the two endpoint
pieces have disjoint supports, so
\[
\norm{(\mathcal H_2-\mathcal H_2^{c,d})(f,g)}_Y^q
=
\norm{\chi_{(a,c)}\mathcal H_2(f,g)}_Y^q
+
\norm{\chi_{(d,b)}\mathcal H_2(f,g)}_Y^q.
\]
Taking the supremum over the product unit ball proves
\eqref{eq:truncation-error-estimate}.  Theorem
\ref{thm:abstract-compact-middle-localisation}, applied to
\(T=\mathcal H_2\), now shows that compactness is equivalent to vanishing
combined tail, which by \eqref{eq:truncation-error-estimate} and the reverse
restriction bounds is equivalent to
\eqref{eq:lower-tail-vanishing}--\eqref{eq:upper-tail-vanishing}.
The same theorem identifies vanishing combined tail with
\eqref{eq:middle-truncation-convergence}.  This proves all three
equivalences.
\end{proof}

\subsection{The finite-rank enhancement}

The next hypothesis is genuinely additional. Compact bilinear maps are not
being identified in general with the operator-(quasi-)norm closure of
finite-rank bilinear maps.

\begin{theorem}[Abstract finite-rank enhancement]
\label{thm:abstract-finite-rank-enhancement}
Assume the hypotheses of
Theorem~\ref{thm:abstract-compact-middle-localisation}.  Suppose in addition
that
\begin{equation}
T^{c,d}\in
\overline{\mathcal F_2(E_1,E_2;Y)}^{\,\norm{\cdot}}
\qquad (a<c<d<b).
\label{eq:abstract-middle-finite-rank-density}
\end{equation}
Then
\begin{equation}
\operatorname{dist}_{\mathrm{fr}}(T)
=
\tau(T),
\label{eq:abstract-finite-rank-tail}
\end{equation}
and, in the powered metric,
\begin{equation}
d_{\mathrm{fr},\vartheta}(T)=\tau(T)^{\vartheta}.
\label{eq:abstract-powered-finite-rank-tail}
\end{equation}
Moreover,
\begin{equation}
\lim_{n\to\infty}a_n^{(2)}(T)
=
\operatorname{dist}_{\mathrm{fr}}(T)
=
\tau(T),
\label{eq:abstract-approximation-number-limit}
\end{equation}
with the equivalent \(\vartheta\)-powered identity.
\end{theorem}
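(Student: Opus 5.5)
We prove \eqref{eq:abstract-finite-rank-tail} by a two-sided comparison, using Theorem~\ref{thm:abstract-compact-middle-localisation} for the lower bound and hypothesis \eqref{eq:abstract-middle-finite-rank-density} for the upper bound. Every bounded finite-rank bilinear map is compact, by Lemma~\ref{lem:finite-dimensional-range}. Hence \(\mathcal F_2\subset\mathcal K_2\), and directly from the definitions
\[
\operatorname{dist}_{\mathrm{fr}}(T)\geq\norm{T}_{\mathrm e}=\tau(T),
\]
where the equality is \eqref{eq:abstract-essential-mnc-tail}.

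For the upper bound, fix \(a<c<d<b\) and \(\varepsilon>0\). By \eqref{eq:abstract-middle-finite-rank-density} there is \(F\in\mathcal F_2\) with \(\norm{T^{c,d}-F}<\varepsilon\). We do not assume that \(F\) is supported in \((c,d)\). We therefore compare through the \(\vartheta\)-triangle inequality \eqref{eq:operator-theta-triangle}, using \(T-F=Q_{c,d}T+(T^{c,d}-F)\):
\[
\operatorname{dist}_{\mathrm{fr}}(T)^{\vartheta}\leq\norm{T-F}^{\vartheta}\leq\gamma_T(c,d)^{\vartheta}+\varepsilon^{\vartheta}.
\]
If one wants the sharper \(q\)-additive form, one may replace \(F\) by \(\chi_{(c,d)}F\). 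This map is still finite-rank, with rank at most that of \(F\), because multiplication by an indicator is linear and maps the span of the range into a space of no larger dimension. It is also no farther from \(T^{c,d}\), by indicator contraction \eqref{eq:indicator-nonexpansive}. Disjoint supports then give \(\norm{T-\chi_{(c,d)}F}^q\leq\gamma_T(c,d)^q+\varepsilon^q\); either route suffices. Letting \(\varepsilon\downarrow0\) and infimising over \((c,d)\) yields \(\operatorname{dist}_{\mathrm{fr}}(T)\leq\tau(T)\). The powered identity \eqref{eq:abstract-powered-finite-rank-tail} then follows from Lemma~\ref{lem:power-normalisation} together with \eqref{eq:powered-tail-limit}.

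For \eqref{eq:abstract-approximation-number-limit}, the admissible class \(\{F:\operatorname{rank}F<n\}\) increases with \(n\), so \(a_n^{(2)}(T)\) is nonincreasing and bounded below by \(0\); hence the limit exists. Every finite-rank \(F\) has \(\operatorname{rank}F<n\) for some \(n\), so the union of these classes is \(\mathcal F_2\). Therefore
\[
\lim_n a_n^{(2)}(T)=\inf_n a_n^{(2)}(T)=\operatorname{dist}_{\mathrm{fr}}(T),
\]
which equals \(\tau(T)\) by the first part. The \(\vartheta\)-powered version follows again from Lemma~\ref{lem:power-normalisation}, since \(r\mapsto r^\vartheta\) is continuous and monotone.

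The only delicate point is the lower bound \(\operatorname{dist}_{\mathrm{fr}}\geq\norm{T}_{\mathrm e}\). It requires that finite-rank maps are compact in the quasi-Banach setting, which we take from Lemma~\ref{lem:finite-dimensional-range}: finite-dimensional subspaces of \(Y\) are closed and carry the Euclidean topology, and the image of a bounded set under a bounded map is bounded. Everything else is a formal \(\varepsilon\)-argument.
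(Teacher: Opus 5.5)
Your proof is correct and follows the paper's argument essentially step for step: the lower bound comes from $\mathcal F_2\subset\mathcal K_2$ together with $\norm{T}_{\mathrm e}=\tau(T)$, the upper bound from the $\vartheta$-triangle inequality applied to $T-F=Q_{c,d}T+(T^{c,d}-F)$, and the approximation-number limit from the increasing rank classes whose union is $\mathcal F_2$. Your optional replacement of $F$ by $\chi_{(c,d)}F$ to get the sharper $q$-additive bound is valid but, as you note, not needed.
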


\begin{proof}
Every finite-rank bilinear map is compact by
Lemma~\ref{lem:finite-dimensional-range}. Hence
\[
\norm{T}_{\mathrm e}
\leq
\operatorname{dist}_{\mathrm{fr}}(T).
\]
Fix \(c,d\), and choose a finite-rank \(F\) approximating \(T^{c,d}\) in
operator quasi-norm.  The \(\vartheta\)-triangle inequality gives
\[
\norm{T-F}^{\vartheta}
\leq
\gamma_T(c,d)^{\vartheta}
+
\norm{T^{c,d}-F}^{\vartheta}.
\]
Letting the approximation error tend to zero and then taking the infimum
over the middle windows yields
\[
\operatorname{dist}_{\mathrm{fr}}(T)\leq\tau(T).
\]
Theorem~\ref{thm:abstract-compact-middle-localisation} gives
\(\norm{T}_{\mathrm e}=\tau(T)\), proving
\eqref{eq:abstract-finite-rank-tail}.
Lemma~\ref{lem:power-normalisation} gives
\eqref{eq:abstract-powered-finite-rank-tail}.

For \(n\geq1\), let
\[
\mathscr A_n
:=
\{F\in\mathcal F_2(E_1,E_2;Y):\operatorname{rank}F<n\}.
\]
The classes \(\mathscr A_n\) increase and their union is the class of all
finite-rank bilinear maps.  Therefore the defining infima decrease to
\(\operatorname{dist}_{\mathrm{fr}}(T)\), which proves
\eqref{eq:abstract-approximation-number-limit}.  The powered statement again
follows from Lemma~\ref{lem:power-normalisation}.
\end{proof}

For \(\mathcal H_2\), the additional hypothesis
\eqref{eq:abstract-middle-finite-rank-density} is supplied by the explicit
piecewise-constant sampler of Proposition~\ref{prop:quantitative-finite-rank}.
Thus the abstract enhancement yields the following concrete headline result.

\begin{theorem}[Exact quantitative identities for the weighted bilinear
Hardy operator]
\label{thm:essential-norm}
Assume that
\(\mathcal H_2:X_1\times X_2\to Y\) is bounded.  Then
\begin{equation}
e_{\vartheta}(\mathcal H_2)
=
d_{\mathrm{fr},\vartheta}(\mathcal H_2)
=
\chi_{\rho_Y}\!\left(
\mathcal H_2(B_{X_1}\times B_{X_2})
\right)
=
\tau_{\vartheta}(\mathcal H_2).
\label{eq:exact-powered-essential-norm}
\end{equation}
Equivalently, in the quasi-norm-radius normalisation,
\begin{equation}
\norm{\mathcal H_2}_{\mathrm e}
=
\operatorname{dist}_{\mathrm{fr}}(\mathcal H_2)
=
\chi_Y\!\left(
\mathcal H_2(B_{X_1}\times B_{X_2})
\right)
=
\tau(\mathcal H_2).
\label{eq:exact-essential-norm}
\end{equation}
If
\[
\alpha_*:=\lim_{c\downarrow a}\alpha(c),
\qquad
\beta_*:=\lim_{d\uparrow b}\beta(d),
\]
then
\begin{equation}
\max\{\alpha_*,\beta_*\}
\leq
\norm{\mathcal H_2}_{\mathrm e}
\leq
\bigl(\alpha_*^q+\beta_*^q\bigr)^{1/q}.
\label{eq:essential-norm-tail-sandwich}
\end{equation}
In particular, \(\mathcal H_2\) is compact if and only if its essential
(quasi-)norm is zero, equivalently if and only if both endpoint-tail limits
vanish.
\end{theorem}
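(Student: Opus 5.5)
The plan is to obtain Theorem~\ref{thm:essential-norm} as a direct specialisation of the two abstract tiers, with \(E_i=X_i\) and \(T=\mathcal H_2\). First we check the hypotheses of Theorem~\ref{thm:abstract-compact-middle-localisation}. The map \(\mathcal H_2\) is bounded by assumption. Every middle truncation \(\mathcal H_2^{c,d}\) is compact by Lemma~\ref{lem:middle-truncation-compact}. As noted after \eqref{eq:abstract-tail-limit}, the abstract quantities \(T^{c,d}\), \(\alpha_T\), \(\beta_T\), \(\gamma_T\) and \(\tau(T)\) coincide with \(\mathcal H_2^{c,d}\), \(\alpha\), \(\beta\), \(\gamma\) and \(\tau(\mathcal H_2)\) from Section~\ref{sec:localisation}. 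Hence \eqref{eq:abstract-essential-mnc-tail} yields
\[
\norm{\mathcal H_2}_{\mathrm e}=\chi_Y\bigl(\mathcal H_2(B_{X_1}\times B_{X_2})\bigr)=\tau(\mathcal H_2).
\]
The sandwich \eqref{eq:abstract-tail-sandwich} yields \eqref{eq:essential-norm-tail-sandwich}, and \eqref{eq:abstract-compactness-equivalence} yields the final compactness characterisation.

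Next we verify the extra hypothesis \eqref{eq:abstract-middle-finite-rank-density} of Theorem~\ref{thm:abstract-finite-rank-enhancement}. Fix \(a<c<d<b\). Proposition~\ref{prop:quantitative-finite-rank}, specifically \eqref{eq:middle-finite-rank-density}, provides for each \(\varepsilon>0\) a partition \(\mathcal P\) such that \(\mathcal F_{\mathcal P}\) is a bounded finite-rank bilinear map with \(\norm{\mathcal H_2^{c,d}-\mathcal F_{\mathcal P}}<\varepsilon\). Boundedness of \(\mathcal F_{\mathcal P}\) comes from the bounded point evaluations \(f\mapsto Hf(x_j)\) of Lemma~\ref{lem:local-Hardy-estimates}. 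Thus \(\mathcal H_2^{c,d}\) lies in the operator-quasi-norm closure of \(\mathcal F_2(X_1,X_2;Y)\). Theorem~\ref{thm:abstract-finite-rank-enhancement} then gives \(\operatorname{dist}_{\mathrm{fr}}(\mathcal H_2)=\tau(\mathcal H_2)\). Combined with the previous step, this completes \eqref{eq:exact-essential-norm}.

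Finally, we pass to the powered form \eqref{eq:exact-powered-essential-norm}. Lemma~\ref{lem:power-normalisation} converts each unpowered quantity into its powered counterpart: \(e_\vartheta=\norm{\cdot}_{\mathrm e}^\vartheta\), \(d_{\mathrm{fr},\vartheta}=\operatorname{dist}_{\mathrm{fr}}^\vartheta\) and \(\chi_{\rho_Y}=\chi_Y^\vartheta\). Identity \eqref{eq:powered-tail-limit} gives \(\tau_\vartheta(\mathcal H_2)=\tau(\mathcal H_2)^\vartheta\). Raising the chain \eqref{eq:exact-essential-norm} to the power \(\vartheta\) then gives the powered statement, and the argument can equally be run in reverse.

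There is no genuine analytic obstacle, since all the work was done in the abstract tiers and in Section~\ref{sec:middle-compactness}. The step needing the most care is bookkeeping. We must make sure that the quasi-norm in \eqref{eq:middle-finite-rank-density} is the same operator quasi-norm used in \eqref{eq:abstract-middle-finite-rank-density}. We must also make sure that the cutoff identities \(\mathcal H_2=\mathcal H_2^{c,d}+Q_{c,d}\mathcal H_2\) hold \(\mu_u\)-a.e., with the cut points null, so that the abstract \(\gamma_T\) really equals \(\gamma\) from \eqref{eq:combined-tail-norm}.
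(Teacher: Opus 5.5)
Your proposal is correct and follows the paper's proof essentially verbatim. Lemma~\ref{lem:middle-truncation-compact} and Proposition~\ref{prop:quantitative-finite-rank} verify the hypotheses of the two abstract tiers, which are then applied to \(T=\mathcal H_2\) together with Lemma~\ref{lem:power-normalisation} and \(\tau_\vartheta=\tau^\vartheta\); the only cosmetic difference is that you read the final compactness equivalence directly from \eqref{eq:abstract-compactness-equivalence}, whereas the paper cites Theorem~\ref{thm:central-localisation}, which is itself derived from that same abstract statement.
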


\begin{proof}
Lemma~\ref{lem:middle-truncation-compact} and
Proposition~\ref{prop:quantitative-finite-rank} verify, respectively, the
compact-middle and finite-rank-density hypotheses of
Theorems~\ref{thm:abstract-compact-middle-localisation}
and~\ref{thm:abstract-finite-rank-enhancement}.  Applying those theorems to
\(T=\mathcal H_2\), and using
\(\tau_{\vartheta}(\mathcal H_2)=\tau(\mathcal H_2)^{\vartheta}\), gives
\eqref{eq:exact-powered-essential-norm} and
\eqref{eq:exact-essential-norm}.  The abstract tail sandwich gives
\eqref{eq:essential-norm-tail-sandwich}, and
Theorem~\ref{thm:central-localisation} gives the final compactness
equivalence.
\end{proof}

\begin{corollary}[Limit of the bilinear approximation numbers]
\label{cor:approximation-number-limit}
Under the assumptions of Theorem~\ref{thm:essential-norm},
\begin{equation}
\lim_{n\to\infty}a_n^{(2)}(\mathcal H_2)
=
\tau(\mathcal H_2)
=
\norm{\mathcal H_2}_{\mathrm e}.
\label{eq:approximation-number-limit}
\end{equation}
Equivalently,
\[
\lim_{n\to\infty}a_{n,\vartheta}^{(2)}(\mathcal H_2)
=\tau_{\vartheta}(\mathcal H_2)
=e_{\vartheta}(\mathcal H_2).
\]
Consequently, \(\mathcal H_2\) is compact if and only if
\(a_n^{(2)}(\mathcal H_2)\to0\).
\end{corollary}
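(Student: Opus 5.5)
The corollary should follow directly from Theorem~\ref{thm:abstract-finite-rank-enhancement} applied to \(T=\mathcal H_2\), together with Theorem~\ref{thm:essential-norm}; no new estimate is needed.

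\emph{Step 1: hypotheses.} Boundedness of \(\mathcal H_2\) is assumed. Compactness of every middle truncation \(\mathcal H_2^{c,d}\) is Lemma~\ref{lem:middle-truncation-compact}. The finite-rank density hypothesis \eqref{eq:abstract-middle-finite-rank-density} is \eqref{eq:middle-finite-rank-density} of Proposition~\ref{prop:quantitative-finite-rank}, since each sampler \(\mathcal F_{\mathcal P}\) is a bounded finite-rank bilinear map.

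\emph{Step 2: the limit.} Equation \eqref{eq:abstract-approximation-number-limit} then gives \(\lim_n a_n^{(2)}(\mathcal H_2)=\operatorname{dist}_{\mathrm{fr}}(\mathcal H_2)=\tau(\mathcal H_2)\), and \eqref{eq:exact-essential-norm} identifies this value with \(\norm{\mathcal H_2}_{\mathrm e}\). The underlying mechanism is elementary. The rank classes \(\mathscr A_n\) increase in \(n\) and exhaust \(\mathcal F_2\), so the infima \(a_n^{(2)}(\mathcal H_2)\) are nonincreasing and bounded below by zero. Hence they converge, and their limit equals their infimum, which is \(\operatorname{dist}_{\mathrm{fr}}(\mathcal H_2)\) by definition.

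\emph{Step 3: powered form.} Lemma~\ref{lem:power-normalisation} gives \(a_{n,\vartheta}^{(2)}=(a_n^{(2)})^{\vartheta}\). Continuity of \(r\mapsto r^\vartheta\) lets the limit pass through the power. Then \eqref{eq:powered-tail-limit} and \eqref{eq:exact-powered-essential-norm} yield the identity \(\lim_n a_{n,\vartheta}^{(2)}(\mathcal H_2)=\tau_\vartheta(\mathcal H_2)=e_\vartheta(\mathcal H_2)\).

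\emph{Step 4: compactness criterion.} The final equivalence reads as follows. We have \(a_n^{(2)}(\mathcal H_2)\to0\) if and only if \(\tau(\mathcal H_2)=0\). Under the standing boundedness assumption, this holds if and only if \(\mathcal H_2\) is compact, by \eqref{eq:abstract-compactness-equivalence} or the last assertion of Theorem~\ref{thm:essential-norm}.

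The only point needing care is that the corollary, like Theorem~\ref{thm:essential-norm}, presupposes boundedness, so that \(\tau\) and the \(a_n^{(2)}\) are finite. I expect no real obstacle beyond this bookkeeping.
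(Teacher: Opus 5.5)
Your proposal is correct and follows the paper's own proof. The paper likewise obtains the corollary by applying Theorem~\ref{thm:abstract-finite-rank-enhancement} to $T=\mathcal H_2$, with the hypotheses supplied by Lemma~\ref{lem:middle-truncation-compact} and Proposition~\ref{prop:quantitative-finite-rank}; the only cosmetic difference is that it cites Theorem~\ref{thm:central-localisation} for the final compactness equivalence, whereas you use \eqref{eq:abstract-compactness-equivalence} or the last clause of Theorem~\ref{thm:essential-norm}, which amount to the same statement.
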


\begin{proof}
This is Theorem~\ref{thm:abstract-finite-rank-enhancement} applied to
\(T=\mathcal H_2\), together with
Theorem~\ref{thm:central-localisation}.
\end{proof}

\begin{remark}[Why the abstract theorem remains Hardy-specific in its
application]
The identities above are not consequences of compactness or finite-rank
definitions alone.  The abstract tier requires a cutoff family that
annihilates compact images uniformly and compactness of every middle cutoff.
For \(\mathcal H_2\), Lemma~\ref{lem:middle-truncation-compact} proves the
latter property from the local weighted Hardy estimates.  Equality with
finite-rank distance additionally uses the bilinear sampler
\(\mathcal F_{\mathcal P}\) from
Proposition~\ref{prop:quantitative-finite-rank}.  Thus the abstract theorem
isolates the reusable localization engine, while its hypotheses are realised
here by the operator-specific interior argument.
\end{remark}

\section{The four interval geometries}
\label{sec:geometries}

The notation in Theorem~\ref{thm:central-localisation} covers all four
nonempty open-interval geometries. The endpoint limits are given below.

\begin{center}
\begin{tabular}{@{}lll@{}}
\toprule
Interval \(I\) & lower-tail limit & upper-tail limit\\
\midrule
\((a,b)\) & \(c\downarrow a\) & \(d\uparrow b\)\\
\((a,\infty)\) & \(c\downarrow a\) & \(d\to+\infty\)\\
\((-\infty,b)\) & \(c\to-\infty\) & \(d\uparrow b\)\\
\(\mathbb R\) & \(c\to-\infty\) & \(d\to+\infty\)\\
\bottomrule
\end{tabular}
\end{center}

The endpoint mechanism can be realised by one explicit exhaustion in each
case.  The following statement supplies the concrete tail projections used
in the compactness and finite-centre arguments.

\begin{proposition}[Endpoint exhaustions in all interval geometries]
\label{prop:endpoint-exhaustions}
For \(n\geq1\), choose \((c_n,d_n)\) as follows.
\[
\begin{array}{c|cc}
I & c_n & d_n\\ \hline
(a,b) & a+\dfrac{b-a}{n+2} & b-\dfrac{b-a}{n+2}\\[2mm]
(a,\infty) & a+\dfrac1{n+1} & a+n+1\\[2mm]
(-\infty,b) & b-n-1 & b-\dfrac1{n+1}\\[2mm]
\mathbb R & -n & n
\end{array}
\]
Set
\[
E_n:=(a,c_n)\cup(d_n,b),
\qquad
Q_nh:=\chi_{E_n}h,
\]
with the evident interpretation at infinite endpoints.  Then
\(a<c_n<d_n<b\), \(E_n\downarrow\varnothing\), and
\begin{equation}
\rho_Y(Q_nh,0)\longrightarrow0
\qquad(h\in Y).
\label{eq:geometry-tail-pointwise}
\end{equation}
Every \(Q_n\) is nonexpansive in \(\rho_Y\).  Moreover, for every finite set
\(\{y_1,\ldots,y_N\}\subset Y\),
\begin{equation}
\max_{1\leq j\leq N}\norm{Q_ny_j}_Y\longrightarrow0,
\label{eq:finite-centre-tail-vanishing}
\end{equation}
and for every compact \(K\subset Y\),
\begin{equation}
\sup_{h\in K}\norm{Q_nh}_Y\longrightarrow0.
\label{eq:compact-image-tail-vanishing}
\end{equation}
\end{proposition}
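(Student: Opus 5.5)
I will check the four geometries case by case, then derive the analytic statements from Lemma~\ref{lem:weighted-Lq-completeness} together with an elementary finite-net argument.

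\textbf{Step 1: the windows.} For \(I=(a,b)\) with both endpoints finite, \(0<\frac{b-a}{n+2}\leq\frac{b-a}{3}<\frac{b-a}{2}\). Hence
\[
a<c_n<\tfrac{a+b}{2}<d_n<b.
\]
Moreover \(c_n\) strictly decreases to \(a\) and \(d_n\) strictly increases to \(b\). In the half-line cases \(a<a+\frac1{n+1}<a+1<a+n+1\), and symmetrically for \((-\infty,b)\). For \(\mathbb R\), clearly \(-n<n\).

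In each case \(c_n\) is monotone decreasing toward the lower endpoint and \(d_n\) is monotone increasing toward the upper endpoint. Thus \(E_n\) decreases. Any \(x\in I\) satisfies \(c_n<x<d_n\) for all large \(n\), so \(\bigcap_n E_n=\varnothing\). Consequently \(\chi_{E_n}\to0\) pointwise everywhere, hence \(\mu_u\)-almost everywhere.

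\textbf{Step 2: pointwise tail decay and nonexpansiveness.} Lemma~\ref{lem:weighted-Lq-completeness}, applied to the sets \(E_n\), gives \eqref{eq:geometry-tail-pointwise} directly through \eqref{eq:indicator-tail-pointwise}. It also gives nonexpansiveness of \(Q_n=P_{E_n}\) through \eqref{eq:indicator-nonexpansive}.

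Since \(\rho_Y(Q_nh,0)=\norm{Q_nh}_Y^{\vartheta}\) and \(r\mapsto r^{1/\vartheta}\) is continuous at \(0\), we get \(\norm{Q_ny_j}_Y\to0\) for each \(j\). A maximum over finitely many null sequences is null, which yields \eqref{eq:finite-centre-tail-vanishing}.

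\textbf{Step 3: uniform decay on compact sets.} This is the only step requiring an argument beyond quoting the lemma, and the main care point is handling the \(\vartheta\)-triangle inequality rather than an ordinary one.

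Fix \(\varepsilon>0\). Compactness of \(K\) in \(\rho_Y\) gives a finite \(\rho_Y\)-net \(y_1,\dots,y_N\in K\) of radius \(\varepsilon^{\vartheta}\). For \(h\in K\), choose \(j\) with \(\norm{h-y_j}_Y<\varepsilon\). Then \eqref{eq:theta-triangle} and the contraction \eqref{eq:indicator-nonexpansive} give
\[
\norm{Q_nh}_Y^{\vartheta}\leq\norm{Q_n(h-y_j)}_Y^{\vartheta}+\norm{Q_ny_j}_Y^{\vartheta}\leq\varepsilon^{\vartheta}+\max_{1\leq j\leq N}\norm{Q_ny_j}_Y^{\vartheta}.
\]
By Step 2, the maximum is below \(\varepsilon^{\vartheta}\) for large \(n\), uniformly in \(h\). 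Hence
\[
\sup_{h\in K}\norm{Q_nh}_Y\leq 2^{1/\vartheta}\varepsilon
\]
eventually, which proves \eqref{eq:compact-image-tail-vanishing}.

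No step needs local convexity: only the \(\vartheta\)-triangle inequality and monotone convergence of the indicators are used. The monotonicity of \(E_n\) is not essential for \eqref{eq:geometry-tail-pointwise}, but it records the directed character of the exhaustion.
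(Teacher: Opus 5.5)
Your proposal is correct and follows the paper's proof. The paper likewise reads the window inequalities and $E_n\downarrow\varnothing$ off the explicit formulas, takes the pointwise decay, nonexpansiveness and finite-centre statement from Lemma~\ref{lem:weighted-Lq-completeness}, and cites Lemma~\ref{lem:uniform-absolute-continuity} for the compact-image statement; your Step~3 simply writes out the same $\vartheta$-triangle finite-net argument that proves that appendix lemma.
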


\begin{proof}
The displayed choices give nested endpoint sets with empty intersection, so
\(\chi_{E_n}\to0\) pointwise on \(I\).  Equations
\eqref{eq:geometry-tail-pointwise} and the nonexpansiveness assertion follow
from Lemma~\ref{lem:weighted-Lq-completeness}.  Equation
\eqref{eq:finite-centre-tail-vanishing} follows by taking the maximum of
finitely many convergent sequences.  Finally,
\eqref{eq:compact-image-tail-vanishing} is
Lemma~\ref{lem:uniform-absolute-continuity}.  Thus the pointwise,
finite-centre, and compact-image tail hypotheses used in Section~5 hold for
all four geometries without any additional endpoint convention.
\end{proof}

For the left-infinite and whole-line cases the Hardy primitive is the
genuine improper primitive
\[
H_{-\infty}f(x)=\int_{(-\infty,x]}f(t)\,\dd t.
\]
It is not replaced by a finite lower truncation depending on the chosen
middle window.

\section{Transfer from boundedness characteristics}
\label{sec:characteristics}

The localisation theorem becomes especially useful when a quantitative
boundedness characterisation is already available.  The following observation
makes this transfer precise.

\begin{corollary}[Truncated-characteristic principle]
\label{cor:truncated-characteristic}
Let \(\mathscr W\) be a class of nonnegative measurable output weights.
Suppose that \(u\), \(u\chi_{(a,c)}\), and \(u\chi_{(d,b)}\) belong to
\(\mathscr W\) for every \(a<c<d<b\).  Suppose further that a nonnegative
functional $\mathcal A(w;v_1,v_2)$ and constants $C_1,C_2>0$, depending only
on $p_1,p_2,q$ and independent of \(w\in\mathscr W\), satisfy
\begin{equation}
C_1\mathcal A(w;v_1,v_2)
\leq
\norm{\mathcal H_2}_{L^{p_1}(v_1;I)\times L^{p_2}(v_2;I)\to L^q(w;I)}
\leq
C_2\mathcal A(w;v_1,v_2)
\label{eq:boundedness-characteristic-equivalence}
\end{equation}
for every $w\in\mathscr W$. Then
\[
\mathcal H_2:
L^{p_1}(v_1;I)\times L^{p_2}(v_2;I)
\longrightarrow L^q(u;I)
\]
is compact if and only if
\[
\mathcal A(u;v_1,v_2)<\infty,
\]
\[
\lim_{c\downarrow a}
\mathcal A(u\chi_{(a,c)};v_1,v_2)=0,
\]
and
\[
\lim_{d\uparrow b}
\mathcal A(u\chi_{(d,b)};v_1,v_2)=0.
\]
Moreover, if
\begin{align*}
\mathcal A_-^*
&:=
\limsup_{c\downarrow a}
\mathcal A(u\chi_{(a,c)};v_1,v_2),\\
\mathcal A_+^*
&:=
\limsup_{d\uparrow b}
\mathcal A(u\chi_{(d,b)};v_1,v_2),
\end{align*}
then
\begin{equation}
C_1\max\{\mathcal A_-^*,\mathcal A_+^*\}
\leq
\norm{\mathcal H_2}_{\mathrm e}
\leq
C_2\bigl((\mathcal A_-^*)^q+(\mathcal A_+^*)^q\bigr)^{1/q}.
\label{eq:characteristic-essential-norm}
\end{equation}
\end{corollary}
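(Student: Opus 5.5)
The plan is to reduce everything to Theorem~\ref{thm:central-localisation} and Theorem~\ref{thm:essential-norm}, using the key identity
\[
\alpha(c)=\norm{\mathcal H_2}_{L^{p_1}(v_1)\times L^{p_2}(v_2)\to L^q(u\chi_{(a,c)})},
\qquad
\beta(d)=\norm{\mathcal H_2}_{L^{p_1}(v_1)\times L^{p_2}(v_2)\to L^q(u\chi_{(d,b)})}.
\]
This identity holds because
\[
\norm{\chi_{(a,c)}h}_{L^q(u)}=\norm{h}_{L^q(u\chi_{(a,c)})}
\]
for every measurable \(h\). Since \(u\chi_{(a,c)}\) and \(u\chi_{(d,b)}\) lie in \(\mathscr W\), the hypothesis \eqref{eq:boundedness-characteristic-equivalence} applied with \(w=u\chi_{(a,c)}\) and \(w=u\chi_{(d,b)}\) gives
\[
C_1\mathcal A(u\chi_{(a,c)};v_1,v_2)\leq\alpha(c)\leq C_2\mathcal A(u\chi_{(a,c)};v_1,v_2),
\]
and the analogous bounds for \(\beta(d)\). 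Applying \eqref{eq:boundedness-characteristic-equivalence} with \(w=u\) shows that \(\mathcal H_2\) is bounded iff \(\mathcal A(u;v_1,v_2)<\infty\).

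One point must be settled before this is used: \(\alpha\) and \(\beta\) were defined only when \(\mathcal H_2\) is bounded. In the compactness direction, boundedness follows because compact maps are bounded by definition, since \(\mathcal K_2\subset\mathcal B_2\). In the converse direction, boundedness follows from \(\mathcal A(u)<\infty\) via the upper equivalence. Under boundedness, \(\alpha\) is nondecreasing in \(c\) and \(\beta\) is nonincreasing in \(d\). Hence the limits \(\alpha_*,\beta_*\) exist.

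For the compactness criterion, I invoke the equivalence (i)\(\Leftrightarrow\)(ii) of Theorem~\ref{thm:central-localisation}. Condition (ii) requires boundedness, which is \(\mathcal A(u)<\infty\), together with \(\alpha(c)\to0\) and \(\beta(d)\to0\). By the two-sided bounds, \(\alpha(c)\to0\) iff \(\mathcal A(u\chi_{(a,c)})\to0\), and similarly for \(\beta\). This step needs \(C_1>0\) for one direction and \(C_2<\infty\) for the other.

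For the essential-norm estimate \eqref{eq:characteristic-essential-norm}, I use the sandwich \eqref{eq:essential-norm-tail-sandwich}, which is valid whenever \(\mathcal H_2\) is bounded. For the lower bound,
\[
\alpha_*=\lim\alpha(c)=\limsup\alpha(c)\geq C_1\limsup_{c\downarrow a}\mathcal A(u\chi_{(a,c)})=C_1\mathcal A_-^*,
\]
and likewise \(\beta_*\geq C_1\mathcal A_+^*\). For the upper bound, \(\alpha_*\leq C_2\mathcal A_-^*\) and \(\beta_*\leq C_2\mathcal A_+^*\). Monotonicity of \(s\mapsto s^q\) then gives
\[
(\alpha_*^q+\beta_*^q)^{1/q}\leq C_2\bigl((\mathcal A_-^*)^q+(\mathcal A_+^*)^q\bigr)^{1/q}.
\]

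The main obstacle is only bookkeeping about the case in which \(\mathcal H_2\) is unbounded. Then \(\norm{\mathcal H_2}_{\mathrm e}\) is not defined within \(\mathcal B_2\), so I will state that \eqref{eq:characteristic-essential-norm} is asserted under boundedness, that is, when \(\mathcal A(u)<\infty\). A second small point is that the \(\limsup\) in \(\mathcal A_\pm^*\) is used, since \(\mathcal A\) itself need not be monotone; this is harmless because the two-sided bounds transfer \(\limsup\) in both directions. Finally, the identification of the tail-restricted norms with the norms into the truncated-weight spaces needs the remark that \(u\chi_{(a,c)}\) need not be positive; Remark~\ref{rem:u-hypothesis} allows this, and the quotient by \(u\chi_{(a,c)}\,dx\)-null functions matches that for \(\chi_{(a,c)}h\) in \(L^q(u)\).
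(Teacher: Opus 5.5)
Your proposal is correct and follows the paper's own route. You identify \(\alpha(c)\) and \(\beta(d)\) with the operator norms into \(L^q(u\chi_{(a,c)})\) and \(L^q(u\chi_{(d,b)})\), apply the two-sided characteristic bounds with constants independent of \(c,d\), and then invoke Theorem~\ref{thm:central-localisation} together with the tail sandwich of Theorem~\ref{thm:essential-norm}. Your extra bookkeeping (boundedness in each direction, monotonicity of \(\alpha\) and \(\beta\), transfer of \(\limsup\), and reading the essential-norm estimate under boundedness) only makes explicit what the paper's short proof leaves implicit.
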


\begin{proof}
By the assumed membership and uniformity, apply
\eqref{eq:boundedness-characteristic-equivalence} to the output weights
\[
u,
\qquad
u\chi_{(a,c)},
\qquad
u\chi_{(d,b)}.
\]
Then
\[
\alpha(c)\asymp\mathcal A(u\chi_{(a,c)};v_1,v_2),
\qquad
\beta(d)\asymp\mathcal A(u\chi_{(d,b)};v_1,v_2),
\]
with constants independent of $c$ and $d$. The conclusion follows from
Theorems~\ref{thm:central-localisation} and~\ref{thm:essential-norm}.
\end{proof}

All boundedness theorems used below are commonly stated for nonnegative
inputs.  They determine the same norm for real- or complex-valued inputs,
because \(|Hf|\leq H|f|\) and taking absolute values preserves the weighted
input norms.

For the instantiations below, \(\mathscr W=M_+(I)\), the class of
nonnegative measurable output weights on the general interval \((a,b)\),
with infinite endpoints allowed.  Theorem~A of
\cite{KanjilalPerssonShambilova2019} is formulated on this class in all five
finite-exponent regimes, and its comparison constants depend only on
\(p_1,p_2,q\).  The class is stable under multiplication by interval
indicators.  Consequently, \(u\), \(u\chi_{(a,c)}\), and
\(u\chi_{(d,b)}\) meet the membership requirement above, with constants
independent of \(c,d\).

\subsection{An explicit compactness criterion in the convex range}

We now instantiate Corollary~\ref{cor:truncated-characteristic} with a
classical boundedness characteristic.  Assume in this subsection that
\begin{equation}
1<p_1,p_2<\infty,
\qquad
\max(p_1,p_2)\le q<\infty.
\label{eq:convex-bilinear-range}
\end{equation}
Put
\[
V_i(x):=\int_a^x v_i(t)^{1-p_i'}\,\dd t,
\qquad i=1,2,
\]
and, for any nonnegative measurable output weight $w$, define
\begin{equation}
\mathcal D(w)
:=
\sup_{a<x<b}
\left(\int_x^b w(t)\,\dd t\right)^{1/q}
V_1(x)^{1/p_1'}V_2(x)^{1/p_2'}.
\label{eq:D-characteristic}
\end{equation}
Theorem~1 of Aguilar Ca\~nestro, Ortega Salvador and Ram\'irez Torreblanca
\cite{AguilarCanestroOrtegaRamirez2012} gives the classical formula for
positive weights.  The general-interval \(M_+\) formulation needed for
truncated output weights is Theorem~A(i) of
\cite{KanjilalPerssonShambilova2019}. See also Theorem~B and Theorem~3 of
\cite{GogatishviliJainKanjilal2022}.  These results give
\begin{equation}
\norm{\mathcal H_2}_{L^{p_1}(v_1;I)\times L^{p_2}(v_2;I)\to L^q(w;I)}
\asymp_{p_1,p_2,q}
\mathcal D(w).
\label{eq:D-norm-equivalence}
\end{equation}

For $a<c<d<b$, set
\begin{align}
\mathcal D_-(c)
&:=\mathcal D(u\chi_{(a,c)}) \\
&=
\sup_{a<x<c}
\left(\int_x^c u(t)\,\dd t\right)^{1/q}
V_1(x)^{1/p_1'}V_2(x)^{1/p_2'},
\label{eq:D-lower}
\end{align}
and
\begin{align}
\mathcal D_+(d)
&:=\mathcal D(u\chi_{(d,b)}) \\
&=
\sup_{a<x<b}
\left(\int_{\max\{x,d\}}^b u(t)\,\dd t\right)^{1/q}
V_1(x)^{1/p_1'}V_2(x)^{1/p_2'}.
\label{eq:D-upper}
\end{align}
The second formula is simply the characteristic
\eqref{eq:D-characteristic} evaluated at the truncated output weight. It is
written in this form to avoid imposing any additional continuity assumption
on the functions $V_i$ at $d$.

\begin{corollary}[Explicit Muckenhoupt-type compactness criterion]
\label{cor:explicit-convex-compactness}
Assume the hypotheses of Theorem~\ref{thm:central-localisation} together with
\eqref{eq:convex-bilinear-range}.  Then
\[
\mathcal H_2:
L^{p_1}(v_1;I)\times L^{p_2}(v_2;I)
\longrightarrow L^q(u;I)
\]
is compact if and only if
\begin{equation}
\mathcal D(u)<\infty,
\label{eq:D-global-finite}
\end{equation}
\begin{equation}
\lim_{c\downarrow a}\mathcal D_-(c)=0,
\label{eq:D-lower-vanish}
\end{equation}
and
\begin{equation}
\lim_{d\uparrow b}\mathcal D_+(d)=0.
\label{eq:D-upper-vanish}
\end{equation}
The endpoint limits are interpreted according to the geometry of $I$ as in
Section~\ref{sec:geometries}.
\end{corollary}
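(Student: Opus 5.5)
The plan is to obtain the statement as a direct instance of Corollary~\ref{cor:truncated-characteristic}, taking \(\mathscr W=M_+(I)\) and \(\mathcal A(w;v_1,v_2)=\mathcal D(w)\) as in \eqref{eq:D-characteristic}. Three things need checking: the hypotheses of that corollary, the fact that the truncated characteristics \(\mathcal D(u\chi_{(a,c)})\) and \(\mathcal D(u\chi_{(d,b)})\) are given by the formulas \eqref{eq:D-lower} and \eqref{eq:D-upper}, and the fact that the limits appearing in the corollary are the limits in \eqref{eq:D-lower-vanish}--\eqref{eq:D-upper-vanish}.

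\emph{Step 1: hypotheses.} In the range \eqref{eq:convex-bilinear-range}, the cited Theorem~A(i) of \cite{KanjilalPerssonShambilova2019} provides the norm equivalence \eqref{eq:D-norm-equivalence} for every \(w\in M_+(I)\). Its constants depend only on \(p_1,p_2,q\). The class \(M_+(I)\) is stable under multiplication by \(\chi_{(a,c)}\) and \(\chi_{(d,b)}\), so \eqref{eq:boundedness-characteristic-equivalence} holds uniformly for \(u\), \(u\chi_{(a,c)}\) and \(u\chi_{(d,b)}\). The inputs may be complex-valued; the remark before the subsection, based on \(|Hf|\le H|f|\), covers this. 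The standing local hypotheses \(u\in L^1_{\mathrm{loc}}\) and \eqref{eq:local-weight-condition} are the hypotheses of Theorem~\ref{thm:central-localisation}, which the corollary uses.

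\emph{Step 2: the truncated formulas.} The lower formula needs care. For \(x\ge c\) we have \(\int_x^b u\chi_{(a,c)}=0\), so those points contribute nothing to the supremum. For \(x<c\) the integral equals \(\int_x^c u\). This gives \eqref{eq:D-lower}. The upper formula follows immediately from \(\int_x^b u\chi_{(d,b)}=\int_{\max\{x,d\}}^b u\).

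\emph{Step 3: monotonicity and conclusion.} The corollary is phrased with limits, and \(\limsup\) in the essential-norm part. Both \(\mathcal D_-(c)\) and \(\mathcal D_+(d)\) are monotone: \(\mathcal D_-\) is nondecreasing in \(c\) and \(\mathcal D_+\) is nonincreasing in \(d\), because the truncated weights are pointwise monotone in the cut point. Hence the endpoint limits exist in \([0,\infty]\). They are finite whenever \(\mathcal D(u)<\infty\), since each truncated value is bounded by \(\mathcal D(u)\). Applying Corollary~\ref{cor:truncated-characteristic} then gives that compactness is equivalent to \eqref{eq:D-global-finite}--\eqref{eq:D-upper-vanish}. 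The endpoint limits are read according to the table in Section~\ref{sec:geometries}.

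The only real obstacle is Step 1. The cited characterisation has to apply to output weights that vanish on large sets, that is, to arbitrary \(M_+(I)\) weights on a general, possibly infinite, interval, and not only to positive weights on \((0,\infty)\). The plan relies on Theorem~A(i) of \cite{KanjilalPerssonShambilova2019} for exactly this. If only the positive-weight version were available, I would first establish the two-sided estimate for \(u\chi_{(a,c)}\) by approximating it with \(u\chi_{(a,c)}+\varepsilon w_0\) and letting \(\varepsilon\to0\), using monotone convergence on both sides.
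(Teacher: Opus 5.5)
Your proposal is correct and follows the paper's proof. The paper likewise instantiates Corollary~\ref{cor:truncated-characteristic} with $\mathscr W=M_+(I)$ and $\mathcal A=\mathcal D$, using \eqref{eq:D-norm-equivalence} from Theorem~A(i) of Kanjilal--Persson--Shambilova, and your Steps~2--3 are harmless extra checks, since $\mathcal D_-(c)$ and $\mathcal D_+(d)$ are defined as $\mathcal D$ of the truncated weights. One caution about your unused fallback: monotone convergence alone does not let you exchange the decreasing limit $\varepsilon\downarrow0$ with the supremum over the product unit ball, so you would also need a positive $w_0$ with $\mathcal D(w_0)<\infty$ and the additive bound $\norm{\mathcal H_2}_{\to L^q(w+\varepsilon w_0)}^q\le\norm{\mathcal H_2}_{\to L^q(w)}^q+\varepsilon\norm{\mathcal H_2}_{\to L^q(w_0)}^q$, together with its analogue $\mathcal D(w+\varepsilon w_0)^q\le\mathcal D(w)^q+\varepsilon\,\mathcal D(w_0)^q$.
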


\begin{proof}
Equation~\eqref{eq:D-norm-equivalence} verifies the hypothesis of
Corollary~\ref{cor:truncated-characteristic} with
$\mathcal A=\mathcal D$.  Applying that corollary to $u$,
$u\chi_{(a,c)}$, and $u\chi_{(d,b)}$ gives exactly
\eqref{eq:D-global-finite}--\eqref{eq:D-upper-vanish}.
\end{proof}

\begin{remark}
Corollary~\ref{cor:explicit-convex-compactness} is restricted to
the range in which the simple characteristic \eqref{eq:D-characteristic} is
known to be equivalent to the bilinear operator norm.  The remaining
exponent regimes require different characteristics.  They are stated next.
\end{remark}

\subsection{Mixed exponent regimes}
\label{subsec:mixed-characteristics}

For a nonnegative measurable output weight \(w\), write
\begin{equation}
W_w(x):=\int_x^b w(t)\,\dd t,
\qquad
\dd V_i(x):=v_i(x)^{1-p_i'}\,\dd x.
\label{eq:tail-weight-and-dVi}
\end{equation}
For any \(s>0\), we use the algebraic companion notation
\begin{equation}
\frac1{s'}:=1-\frac1s.
\label{eq:algebraic-companion}
\end{equation}
Thus \(s'=\infty\) at \(s=1\), whereas \(s'<0\) when \(0<s<1\).
Expressions such as \(r/s'\) always mean \(r(1-1/s)\).  In particular,
\(V_i^{r/q'}=1\) when \(q=1\).  Whenever \(r_i\) is defined below by
\(1/r_i=1/q-1/p_i\), the source exponent is equivalently
\[
\frac{r_i}{p_i'}-1
=r_i\left(1-\frac1{p_i}\right)-1
=r_i\left(1-\frac1q\right)
=\frac{r_i}{q'}.
\]
All characteristics below are extended-valued in \([0,\infty]\). For
\(\lambda>0\), we use \(0^\lambda=0\) and
\(\infty^\lambda=\infty\).

The literal \(\dd V_i\)-formulas below are Theorem~A(ii)--(iii) of
\cite{KanjilalPerssonShambilova2019}, with the supporting Theorems~3--4
there.  Theorems~2.2--2.3 of \cite{Krepela2017} provide the \(q>1\)
iteration antecedent in an alternative normalization.

Assume first that
\begin{equation}
1<p_1\leq q<p_2<\infty,
\qquad
\frac1{r_2}:=\frac1q-\frac1{p_2}.
\label{eq:mixed-range-12}
\end{equation}
Define
\begin{equation}
\mathcal A_2(w)
:=
\sup_{a<x<b}
V_1(x)^{1/p_1'}
\left(
\int_x^b
W_w(y)^{r_2/q}
V_2(y)^{r_2/q'}
\,\dd V_2(y)
\right)^{1/r_2}.
\label{eq:A2-characteristic}
\end{equation}
Then
\begin{equation}
\norm{\mathcal H_2}_{L^{p_1}(v_1)\times L^{p_2}(v_2)\to L^q(w)}
\asymp_{p_1,p_2,q}
\mathcal A_2(w).
\label{eq:A2-norm-equivalence}
\end{equation}

In the symmetric mixed range
\begin{equation}
1<p_2\leq q<p_1<\infty,
\qquad
\frac1{r_1}:=\frac1q-\frac1{p_1},
\label{eq:mixed-range-21}
\end{equation}
put
\begin{equation}
\mathcal A_3(w)
:=
\sup_{a<x<b}
V_2(x)^{1/p_2'}
\left(
\int_x^b
W_w(y)^{r_1/q}
V_1(y)^{r_1/q'}
\,\dd V_1(y)
\right)^{1/r_1}.
\label{eq:A3-characteristic}
\end{equation}
Then
\begin{equation}
\norm{\mathcal H_2}_{L^{p_1}(v_1)\times L^{p_2}(v_2)\to L^q(w)}
\asymp_{p_1,p_2,q}
\mathcal A_3(w).
\label{eq:A3-norm-equivalence}
\end{equation}

For later use, set
\begin{equation}
u_c^-:=u\chi_{(a,c)},
\qquad
u_d^+:=u\chi_{(d,b)}.
\label{eq:truncated-output-weights}
\end{equation}

\begin{corollary}[Compactness in the mixed regimes]
\label{cor:mixed-compactness}
Under the hypotheses of Theorem~\ref{thm:central-localisation}, the following
statements hold.
\begin{enumerate}[label=\textnormal{(\alph*)},leftmargin=2.2em]
\item
If \eqref{eq:mixed-range-12} holds, then \(\mathcal H_2\) is compact if and
only if
\[
\mathcal A_2(u)<\infty,
\qquad
\lim_{c\downarrow a}\mathcal A_2(u_c^-)=0,
\qquad
\lim_{d\uparrow b}\mathcal A_2(u_d^+)=0.
\]
\item
If \eqref{eq:mixed-range-21} holds, then \(\mathcal H_2\) is compact if and
only if
\[
\mathcal A_3(u)<\infty,
\qquad
\lim_{c\downarrow a}\mathcal A_3(u_c^-)=0,
\qquad
\lim_{d\uparrow b}\mathcal A_3(u_d^+)=0.
\]
\end{enumerate}
In each case, the essential (quasi-)norm satisfies the corresponding two-sided
estimate \eqref{eq:characteristic-essential-norm}.
\end{corollary}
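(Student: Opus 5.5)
The plan is to apply Corollary~\ref{cor:truncated-characteristic} twice, once with \(\mathcal A=\mathcal A_2\) and once with \(\mathcal A=\mathcal A_3\), in each case with \(\mathscr W=M_+(I)\). That corollary needs two things. First, \(u\), \(u_c^-\) and \(u_d^+\) must lie in \(\mathscr W\). Second, the two-sided comparison \eqref{eq:boundedness-characteristic-equivalence} must hold with constants depending only on \(p_1,p_2,q\), uniformly over \(w\in\mathscr W\). Membership is immediate: \(M_+(I)\) is stable under multiplication by the interval indicators \(\chi_{(a,c)}\) and \(\chi_{(d,b)}\), as already observed before the convex-range subsection.

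For the uniform comparison in case~(a), I would invoke \eqref{eq:A2-norm-equivalence} for every \(w\in M_+(I)\) under \eqref{eq:mixed-range-12}. This is the content of Theorem~A(ii) of \cite{KanjilalPerssonShambilova2019}, whose comparison constants depend only on the exponents and not on \(w\). Case~(b) is identical, using \eqref{eq:A3-norm-equivalence} under \eqref{eq:mixed-range-21}, that is, Theorem~A(iii) there. With this in hand, Corollary~\ref{cor:truncated-characteristic} gives the compactness criterion directly. Its three conditions read \(\mathcal A_k(u)<\infty\), \(\mathcal A_k(u_c^-)\to0\) and \(\mathcal A_k(u_d^+)\to0\) for \(k=2,3\), with the endpoint limits taken according to the geometry of \(I\) as in Section~\ref{sec:geometries}. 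The same corollary also yields the two-sided essential-norm estimate \eqref{eq:characteristic-essential-norm}.

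Two small checks remain. First, the hypotheses of Theorem~\ref{thm:central-localisation} must be in force for Corollary~\ref{cor:truncated-characteristic} to be usable. Note that \(u_c^-\) and \(u_d^+\) stay locally integrable, although only \(u\) itself is needed there. Second, the truncated quantities must really be the characteristics evaluated at the truncated weights, and not reformulated expressions. Here \(W_{u_d^+}(y)=\int_{\max\{y,d\}}^b u\), and \(W_{u_c^-}(y)=\int_y^c u\) for \(y<c\) and \(0\) otherwise. Because the corollary is stated literally in terms of \(\mathcal A_k(u_c^-)\) and \(\mathcal A_k(u_d^+)\), no further simplification, and no continuity of \(V_i\), is required.

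The main obstacle is the legitimacy of the uniform equivalence on all of \(M_+(I)\), including weights that vanish on whole intervals and arbitrary interval geometries. I would handle it by citing the general-interval \(M_+\) formulation of Theorem~A and the extended-valued conventions fixed in Subsection~\ref{subsec:mixed-characteristics}. Those conventions are \(0^\lambda=0\), \(\infty^\lambda=\infty\), and the algebraic companion exponents with \(r_i/q'=r_i/p_i'-1\). They ensure that the displayed \(\dd V_i\)-formulas coincide with the cited ones even for truncated weights.
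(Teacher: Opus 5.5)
Your proposal is correct and matches the paper's proof, which simply applies Corollary~\ref{cor:truncated-characteristic} with \(\mathscr W=M_+(I)\) to the norm equivalences \eqref{eq:A2-norm-equivalence} and \eqref{eq:A3-norm-equivalence}. Your extra checks also coincide with what the paper records in the paragraph preceding the convex-range subsection: that the truncated weights \(u_c^-,u_d^+\) belong to \(M_+(I)\), that the comparison constants depend only on the exponents, and that \(\mathcal A_k(u_c^-)\) and \(\mathcal A_k(u_d^+)\) are read literally.
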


\begin{proof}
Apply Corollary~\ref{cor:truncated-characteristic} to
\eqref{eq:A2-norm-equivalence} or \eqref{eq:A3-norm-equivalence}.
\end{proof}

\subsection{Fully subcritical exponent regimes}
\label{subsec:subcritical-characteristics}

Assume in this subsection that
\begin{equation}
0<q<\min(p_1,p_2),
\qquad
\frac1{r_i}:=\frac1q-\frac1{p_i},
\quad i=1,2.
\label{eq:fully-subcritical-range}
\end{equation}
K\v{r}epela's corresponding iteration theorem
\cite{Krepela2017} assumes \(q>1\).  Theorem~A(iv)--(v), Theorems~5--6, and
Remarks~3--4 of \cite{KanjilalPerssonShambilova2019} give the displayed
characteristics for the full range \(0<q<\min(p_1,p_2)\), including
\(q\leq1\).  Their strict boundary in the deep case is the one used below.
The formulas remain literal in the quasi-Banach range.  By
\eqref{eq:Vi-strictly-positive}, every negative power of \(V_i\) is
well-defined at each interior point.
Define
\begin{align}
\mathcal A_4(w)
&:=
\sup_{a<x<b}
V_1(x)^{1/p_1'}
\left(
\int_x^b
W_w(y)^{r_2/q}
V_2(y)^{r_2/q'}
\,\dd V_2(y)
\right)^{1/r_2},
\label{eq:A4-characteristic}\\
\mathcal A_5(w)
&:=
\sup_{a<x<b}
V_2(x)^{1/p_2'}
\left(
\int_x^b
W_w(y)^{r_1/q}
V_1(y)^{r_1/q'}
\,\dd V_1(y)
\right)^{1/r_1}.
\label{eq:A5-characteristic}
\end{align}

If
\begin{equation}
\frac1q\leq\frac1{p_1}+\frac1{p_2},
\label{eq:moderately-subcritical-condition}
\end{equation}
set
\begin{equation}
\mathcal A_{\mathrm{sub},0}(w)
:=
\mathcal A_4(w)+\mathcal A_5(w).
\label{eq:subcritical-A0}
\end{equation}
The published boundedness theorem gives
\begin{equation}
\norm{\mathcal H_2}_{L^{p_1}(v_1)\times L^{p_2}(v_2)\to L^q(w)}
\asymp_{p_1,p_2,q}
\mathcal A_{\mathrm{sub},0}(w).
\label{eq:subcritical-A0-norm}
\end{equation}

Suppose instead that
\begin{equation}
\frac1q>\frac1{p_1}+\frac1{p_2},
\qquad
\frac1\kappa
:=
\frac1q-\frac1{p_1}-\frac1{p_2}.
\label{eq:deep-subcritical-condition}
\end{equation}
Use the algebraic companions
\begin{equation}
\frac1{r_i'}:=1-\frac1{r_i},
\qquad i=1,2.
\label{eq:ri-algebraic-companion}
\end{equation}
Depending on \(q\), \(r_i\) may be below, equal to, or above \(1\), and
\(r_i'\) may therefore be negative, infinite, or positive.  These are
algebraic powers in the published characteristics, not dual exponents of
quasi-Banach spaces.  When \(r_i'=\infty\), the corresponding exponent
\(\kappa/r_i'\) is zero and the factor is interpreted as one.  The outer
exponents in the source formulas satisfy
\[
\frac{\kappa}{p_1'}-1=\frac{\kappa}{r_2'},
\qquad
\frac{\kappa}{p_2'}-1=\frac{\kappa}{r_1'},
\]
because
\(1/r_2=1/q-1/p_2\), \(1/r_1=1/q-1/p_1\), and
\(1/\kappa=1/q-1/p_1-1/p_2\).  Define
\begin{align}
\mathcal A_6(w)
&:=
\left[
\int_a^b
\left(
\int_x^b
W_w(y)^{r_2/q}
V_2(y)^{r_2/q'}
\,\dd V_2(y)
\right)^{\kappa/r_2}
V_1(x)^{\kappa/r_2'}
\,\dd V_1(x)
\right]^{1/\kappa},
\label{eq:A6-characteristic}\\
\mathcal A_7(w)
&:=
\left[
\int_a^b
\left(
\int_x^b
W_w(y)^{r_1/q}
V_1(y)^{r_1/q'}
\,\dd V_1(y)
\right)^{\kappa/r_1}
V_2(x)^{\kappa/r_1'}
\,\dd V_2(x)
\right]^{1/\kappa}.
\label{eq:A7-characteristic}
\end{align}
Set
\begin{equation}
\mathcal A_{\mathrm{sub},1}(w)
:=
\mathcal A_6(w)+\mathcal A_7(w).
\label{eq:subcritical-A1}
\end{equation}
In this regime,
\begin{equation}
\norm{\mathcal H_2}_{L^{p_1}(v_1)\times L^{p_2}(v_2)\to L^q(w)}
\asymp_{p_1,p_2,q}
\mathcal A_{\mathrm{sub},1}(w).
\label{eq:subcritical-A1-norm}
\end{equation}

\begin{corollary}[Compactness in the fully subcritical regimes]
\label{cor:subcritical-compactness}
Assume the hypotheses of Theorem~\ref{thm:central-localisation} and
\eqref{eq:fully-subcritical-range}.
\begin{enumerate}[label=\textnormal{(\alph*)},leftmargin=2.2em]
\item
Under \eqref{eq:moderately-subcritical-condition}, \(\mathcal H_2\) is
compact if and only if
\[
\mathcal A_{\mathrm{sub},0}(u)<\infty,
\qquad
\lim_{c\downarrow a}\mathcal A_{\mathrm{sub},0}(u_c^-)=0,
\qquad
\lim_{d\uparrow b}\mathcal A_{\mathrm{sub},0}(u_d^+)=0.
\]
\item
Under \eqref{eq:deep-subcritical-condition}, \(\mathcal H_2\) is compact if
and only if
\[
\mathcal A_{\mathrm{sub},1}(u)<\infty,
\qquad
\lim_{c\downarrow a}\mathcal A_{\mathrm{sub},1}(u_c^-)=0,
\qquad
\lim_{d\uparrow b}\mathcal A_{\mathrm{sub},1}(u_d^+)=0.
\]
\end{enumerate}
In each case, the essential (quasi-)norm is controlled from both sides by the
corresponding endpoint limits through
\eqref{eq:characteristic-essential-norm}.
\end{corollary}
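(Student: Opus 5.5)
The corollary will follow, exactly as Corollaries~\ref{cor:explicit-convex-compactness} and~\ref{cor:mixed-compactness} did, from Corollary~\ref{cor:truncated-characteristic}. The plan is therefore to check its hypotheses in each of the two subcritical cases, with \(\mathcal A=\mathcal A_{\mathrm{sub},0}\) under \eqref{eq:moderately-subcritical-condition} and \(\mathcal A=\mathcal A_{\mathrm{sub},1}\) under \eqref{eq:deep-subcritical-condition}. We take \(\mathscr W=M_+(I)\). As recorded before the convex subsection, this class is closed under multiplication by \(\chi_{(a,c)}\) and \(\chi_{(d,b)}\), so \(u\), \(u_c^-\) and \(u_d^+\) all lie in \(\mathscr W\). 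The two-sided norm equivalences \eqref{eq:subcritical-A0-norm} and \eqref{eq:subcritical-A1-norm} are Theorem~A(iv)--(v) of \cite{KanjilalPerssonShambilova2019}, and their constants depend only on \(p_1,p_2,q\). This gives \eqref{eq:boundedness-characteristic-equivalence} uniformly in \(w\in\mathscr W\), and in particular uniformly in the cut points \(c,d\).

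The main point to verify is that the characteristics are well defined, with values in \([0,\infty]\), for the truncated weights. For \(q<1\) the exponents \(r_i/q'\), \(\kappa/r_i'\) may be negative or zero. By \eqref{eq:Vi-strictly-positive}, \(V_i>0\) at every interior point, so negative powers of \(V_i\) are finite and positive inside \(I\). The value \(r_i'=\infty\) is handled by the stated convention that the factor equals one. Also, \(W_{u_c^-}\) vanishes on \([c,b)\) and \(W_{u_d^+}=W_u(\max\{\cdot,d\})\). All integrands are therefore nonnegative measurable functions, and each \(\mathcal A_j(u_c^\pm)\) is a legitimate extended value. This is all the input the published theorem requires, because it is stated on the whole of \(M_+(I)\).

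With the hypotheses verified, Corollary~\ref{cor:truncated-characteristic} gives the compactness equivalences in both (a) and (b). It gives the two-sided essential-norm bound \eqref{eq:characteristic-essential-norm} at the same time. Underlying this are Theorems~\ref{thm:central-localisation} and~\ref{thm:essential-norm}, via \(\alpha(c)\asymp\mathcal A(u_c^-)\) and \(\beta(d)\asymp\mathcal A(u_d^+)\). One point deserves a remark. The corollary states its conditions as limits of \(\mathcal A\), whereas \(\alpha\) and \(\beta\) are monotone and so have limits. The two-sided comparability then shows that \(\mathcal A(u_c^-)\to0\) exactly when \(\alpha(c)\to0\), and likewise at \(b\), so the statement in terms of limits is consistent.

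I expect the only genuine obstacle to be the uniformity of the constants across the truncated weights in the quasi-Banach regime, which is precisely the step that cannot be redone here. My first move is to rely on the cited formulation on \(M_+(I)\) rather than reprove it. As a secondary check, I would note that each \(\mathcal A_j(w)\) depends on \(w\) only through \(W_w\), and that truncation does not change the exponents. So no constant can depend on \(c\) or \(d\), provided the source's constants depend only on \((p_1,p_2,q)\). The exponent split at \(1/q=1/p_1+1/p_2\) is exactly the one in the source, with the strict inequality in the deep case.
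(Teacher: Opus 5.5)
Your proposal is correct and follows the paper's proof. The paper likewise obtains both parts by inserting the cited norm equivalences \eqref{eq:subcritical-A0-norm} and \eqref{eq:subcritical-A1-norm} into Corollary~\ref{cor:truncated-characteristic}, with \(\mathscr W=M_+(I)\) and constants depending only on \(p_1,p_2,q\); your extra checks on the negative powers of \(V_i\) and on the existence of the endpoint limits are consistent with the paper, only more explicit.
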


\begin{proof}
Equations \eqref{eq:subcritical-A0-norm} and
\eqref{eq:subcritical-A1-norm} verify the hypothesis of
Corollary~\ref{cor:truncated-characteristic}.
\end{proof}

\begin{remark}
Together with Corollary~\ref{cor:explicit-convex-compactness} and
Corollary~\ref{cor:mixed-compactness}, the two alternatives in
Corollary~\ref{cor:subcritical-compactness} cover every relative ordering of
\(q,p_1,p_2\) in the range \(1<p_i<\infty\), \(0<q<\infty\).
The split at \eqref{eq:moderately-subcritical-condition} is intrinsic to the
underlying boundedness theorem and cannot be removed by using the mixed
characteristics alone.
\end{remark}

\subsection{Worked examples and compactness thresholds}
\label{subsec:worked-examples}

We illustrate Corollary~\ref{cor:explicit-convex-compactness} by two elementary examples. The first shows
that, on an unbounded interval, the endpoint condition can distinguish
compactness from mere boundedness at a sharp decay threshold. The second
gives a regular bounded-interval model in which both endpoint
characteristics can be evaluated explicitly.

In both examples
\[
p_1=p_2=q=2,
\]
so that \(q=\max(p_1,p_2)\) and the exponent condition of
Corollary~\ref{cor:explicit-convex-compactness} is satisfied. The local hypotheses of Theorem~\ref{thm:central-localisation} are also immediate. The output weight belongs to \(L^1_{\mathrm{loc}}(I)\), and
for the unit input weights one has
\[
V_1(x)=V_2(x)=x<\infty
\]
at every interior point of the corresponding interval.

\medskip
\begin{example}[A sharp decay threshold on the half-line]
\label{ex:halfline-threshold}
Let
\[
I=(0,\infty),\qquad
p_1=p_2=q=2,\qquad
v_1=v_2\equiv1,
\]
and, for \(r>1\), define
\[
u_r(x)=(r-1)(1+x)^{-r}.
\]
Since \(p_1'=p_2'=2\),
\[
V_1(x)=V_2(x)=x.
\]
Moreover,
\[
\int_x^\infty u_r(t)\,dt=(1+x)^{1-r}.
\]
Consequently the characteristic in \eqref{eq:D-characteristic} becomes
\[
\mathcal D(u_r)
=
\sup_{x>0}x(1+x)^{(1-r)/2}.
\]
As \(x\to\infty\),
\[
x(1+x)^{(1-r)/2}
\sim
x^{(3-r)/2}.
\]
On the other hand,
\[
x(1+x)^{(1-r)/2}\longrightarrow0
\qquad (x\downarrow0),
\]
and the function \(x\mapsto x(1+x)^{(1-r)/2}\) is continuous on
\((0,\infty)\). Hence possible divergence of the supremum can occur only
as \(x\to\infty\). It follows that
\[
\mathcal D(u_r)<\infty
\quad\Longleftrightarrow\quad
r\ge3.
\]

For the lower endpoint,
\[
\mathcal D_{r, -}(c)
:=
\mathcal D\!\left(u_r\chi_{(0,c)}\right)
=
\sup_{0<x<c}
x\left(\int_x^c u_r(t)\,dt\right)^{1/2}.
\]
Since
\[
\mathcal D_{r, -}(c)
\le
c\left(\int_0^c u_r(t)\,dt\right)^{1/2}
\le c,
\]
we have
\[
\lim_{c\downarrow0}\mathcal D_{r, -}(c)=0.
\]

For the upper endpoint,
\[
\mathcal D_{r, +}(d)
:=
\mathcal D\!\left(u_r\chi_{(d,\infty)}\right)
=
\sup_{x>0}
x(1+\max\{x,d\})^{(1-r)/2}.
\]
Splitting the supremum at \(x=d\) gives
\[
\mathcal D_{r, +}(d)
=
\max\left\{
d(1+d)^{(1-r)/2},
\,
\sup_{x\ge d}x(1+x)^{(1-r)/2}
\right\},
\]
where the first term is the supremum corresponding to \(0<x<d\).

If \(r>3\), then
\[
d(1+d)^{(1-r)/2}\longrightarrow0
\qquad(d\to\infty),
\]
and
\[
\sup_{x\ge d}x(1+x)^{(1-r)/2}\longrightarrow0
\qquad(d\to\infty).
\]
Hence
\[
\lim_{d\to\infty}\mathcal D_{r, +}(d)=0,
\]
and Corollary~\ref{cor:explicit-convex-compactness} shows that \(\mathcal H_2\) is compact.

At the critical value \(r=3\),
\[
\mathcal D(u_3)
=
\sup_{x>0}\frac{x}{1+x}
=
1.
\]
For every \(d>0\),
\[
\mathcal D_{3, +}(d)
\ge
\sup_{x\ge d}\frac{x}{1+x}
=
1.
\]
Since the defining expression never exceeds \(1\), in fact
\[
\mathcal D_{3, +}(d)=1
\qquad\text{for every }d>0.
\]
Thus the operator is bounded but not compact.

Consequently,
\[
\begin{array}{ccl}
1<r<3
&:&
\mathcal H_2\text{ is unbounded},\\[1mm]
r=3
&:&
\mathcal H_2\text{ is bounded but not compact},\\[1mm]
r>3
&:&
\mathcal H_2\text{ is compact}.
\end{array}
\]
\end{example}

\medskip
\begin{example}[A regular bounded-interval case]
\label{ex:bounded-unit-weight}

Let
\[
I=(0,1),\qquad
p_1=p_2=q=2,\qquad
u=v_1=v_2\equiv1.
\]
Again,
\[
V_1(x)=V_2(x)=x,
\]
and therefore
\[
\mathcal D(u)
=
\sup_{0<x<1}x\sqrt{1-x}.
\]
Since
\[
\frac{d}{dx}\bigl(x\sqrt{1-x}\bigr)
=
\frac{2-3x}{2\sqrt{1-x}},
\]
the maximum occurs at \(x=2/3\). Hence
\[
\mathcal D(u)
=
\frac{2}{3\sqrt3}
<\infty.
\]

For \(0<c<1\),
\[
\mathcal D_{-}(c)
=
\sup_{0<x<c}x\sqrt{c-x}.
\]
Writing \(x=cy\), with \(0<y<1\), gives
\[
x\sqrt{c-x}
=
c^{3/2}y\sqrt{1-y}.
\]
The function \(y\sqrt{1-y}\) attains its maximum at \(y=2/3\), with
maximum value \(2/(3\sqrt3)\). Therefore
\[
\mathcal D_{-}(c)
=
\frac{2}{3\sqrt3}\,c^{3/2},
\]
and hence
\[
\lim_{c\downarrow0}\mathcal D_{-}(c)=0.
\]

For the upper endpoint,
\[
\mathcal D_{+}(d)
=
\sup_{0<x<1}
x\sqrt{1-\max\{x,d\}}.
\]
Splitting the supremum at \(x=d\), we obtain
\[
\mathcal D_{+}(d)
=
\max\left\{
\sup_{0<x<d}x\sqrt{1-d},
\,
\sup_{d\le x<1}x\sqrt{1-x}
\right\}.
\]
The first supremum is
\[
\sup_{0<x<d}x\sqrt{1-d}
=
d\sqrt{1-d}.
\]
If \(d\ge2/3\), the function
\[
x\longmapsto x\sqrt{1-x}
\]
is decreasing on \([d,1)\), and therefore
\[
\sup_{d\le x<1}x\sqrt{1-x}
=
d\sqrt{1-d}.
\]
Consequently,
\[
\mathcal D_{+}(d)
=
d\sqrt{1-d},
\qquad
\frac23\le d<1.
\]
It follows that
\[
\lim_{d\uparrow1}\mathcal D_{+}(d)=0.
\]
Thus, by Corollary~\ref{cor:explicit-convex-compactness},
\[
\mathcal H_2:
L^2(0,1)\times L^2(0,1)
\longrightarrow
L^2(0,1)
\]
is compact.
\end{example}

\medskip
\begin{example}[An asymmetric mixed-range threshold]
\label{ex:asymmetric-mixed-threshold}
Let
\[
I=(0,\infty),
\qquad
p_1=q=2,
\qquad
p_2=4,
\qquad
v_1=v_2\equiv1,
\]
and, for \(r>1\), let
\[
u_r(x):=(r-1)(1+x)^{-r}.
\]
This is the asymmetric mixed regime outside the convex exponent range
\[
1<p_1\leq q<p_2<\infty.
\]
Here \(r_2=4\), \(q'=2\), \(V_1(x)=V_2(x)=x\), and
\[
W_{u_r}(x)=\int_x^\infty u_r(t)\,\dd t=(1+x)^{1-r}.
\]
The characteristic \eqref{eq:A2-characteristic} therefore becomes
\begin{equation}
\mathcal A_2(u_r)
=
\sup_{x>0}
x^{1/2}
\left(
\int_x^\infty
y^2(1+y)^{2(1-r)}\,\dd y
\right)^{1/4}.
\label{eq:asymmetric-A2-global}
\end{equation}

The integral in \eqref{eq:asymmetric-A2-global} is finite for one, and hence
for every, \(x>0\) precisely when \(r>5/2\).  In that range,
\begin{equation}
\int_x^\infty y^2(1+y)^{2(1-r)}\,\dd y
\asymp_r x^{5-2r},
\qquad x\geq1.
\label{eq:asymmetric-integral-asymptotic}
\end{equation}
It follows that the expression under the supremum is comparable, for large
\(x\), to
\[
x^{1/2}x^{(5-2r)/4}=x^{(7-2r)/4}.
\]
Near zero, the integral is finite and multiplication by \(x^{1/2}\) forces
the expression to zero.  Consequently,
\begin{equation}
\mathcal A_2(u_r)<\infty
\quad\Longleftrightarrow\quad
r\geq\frac72.
\label{eq:asymmetric-boundedness-threshold}
\end{equation}

For the lower truncation \(u_{r,c}^-:=u_r\chi_{(0,c)}\), with \(0<c\leq1\),
\[
W_{u_{r,c}^-}(y)
=
\begin{cases}
\displaystyle\int_y^c u_r(t)\,\dd t,&0<y<c,\\
0,&y\geq c.
\end{cases}
\]
Since \(W_{u_{r,c}^-}(y)\leq (r-1)c\) on \((0,c)\),
\begin{align*}
\mathcal A_2(u_{r,c}^-)
&\leq
\sup_{0<x<c}
x^{1/2}
\left(
(r-1)^2c^2\int_x^c y^2\,\dd y
\right)^{1/4}\\
&\leq C_r c^{7/4}.
\end{align*}
Thus
\begin{equation}
\lim_{c\downarrow0}\mathcal A_2(u_{r,c}^-)=0.
\label{eq:asymmetric-lower-tail}
\end{equation}

For the upper truncation \(u_{r,d}^+:=u_r\chi_{(d,\infty)}\),
\[
W_{u_{r,d}^+}(y)
=(1+\max\{y,d\})^{1-r}.
\]
If \(r\geq7/2\) and \(d\geq1\), splitting the defining integral at \(d\) and
using \eqref{eq:asymmetric-integral-asymptotic} gives
\begin{equation}
\mathcal A_2(u_{r,d}^+)
\asymp_r d^{(7-2r)/4}.
\label{eq:asymmetric-upper-tail-rate}
\end{equation}
Indeed, the lower estimate follows by choosing \(x=d\).  For \(x<d\), the
integral is at most a constant multiple of \(d^{5-2r}\), while for
\(x\geq d\) estimate \eqref{eq:asymmetric-integral-asymptotic} applies
directly.

At \(r=7/2\), the upper-tail characteristic remains bounded away from zero.
More precisely,
\[
\lim_{x\to\infty}
x^{1/2}
\left(
\int_x^\infty y^2(1+y)^{-5}\,\dd y
\right)^{1/4}
=2^{-1/4}.
\]
For \(r>7/2\), \eqref{eq:asymmetric-upper-tail-rate} tends to zero.  Hence
Corollary~\ref{cor:mixed-compactness} yields the sharp classification
\[
\begin{array}{ccl}
1<r<7/2
&:&
\mathcal H_2\text{ is unbounded},\\[1mm]
r=7/2
&:&
\mathcal H_2\text{ is bounded but not compact},\\[1mm]
r>7/2
&:&
\mathcal H_2\text{ is compact}.
\end{array}
\]
This example is asymmetric in the two input exponents and lies outside the
convex range of Corollary~\ref{cor:explicit-convex-compactness}.
\end{example}

\medskip
\begin{example}[A genuinely quasi-Banach target with an explicit
approximation upper bound]
\label{ex:quasi-banach-unit-weight}
Let
\[
I=(0,1),\qquad
p_1=p_2=2,\qquad
q=\frac12,\qquad
u=v_1=v_2\equiv1.
\]
The target \(L^{1/2}(0,1)\) is nonlocally convex, and its quasi-norm is
\[
\norm{h}_{1/2}
=
\left(\int_0^1 |h(x)|^{1/2}\,\dd x\right)^2.
\]
For \(\norm{f}_2,\norm{g}_2\leq1\), the Cauchy--Schwarz inequality gives
\[
|Hf(x)|\leq x^{1/2},
\qquad
|Hg(x)|\leq x^{1/2},
\qquad 0<x<1.
\]
Consequently,
\begin{equation}
|\mathcal H_2(f,g)(x)|^{1/2}\leq x^{1/2}.
\label{eq:qhalf-pointwise-majorant}
\end{equation}
In particular,
\[
\norm{\mathcal H_2(f,g)}_{1/2}
\leq
\left(\int_0^1x^{1/2}\,\dd x\right)^2
=\frac49,
\]
so \(\mathcal H_2\) is bounded.  The same majorant gives the explicit tail
estimates
\begin{align}
\alpha(c)
&\leq
\left(\int_0^c x^{1/2}\,\dd x\right)^2
=\frac49c^3,
\label{eq:qhalf-lower-tail}\\
\beta(d)
&\leq
\left(\int_d^1 x^{1/2}\,\dd x\right)^2
=\frac49\bigl(1-d^{3/2}\bigr)^2.
\label{eq:qhalf-upper-tail}
\end{align}
Both limits vanish, and Theorem~\ref{thm:central-localisation} therefore
proves compactness directly in \(L^{1/2}(0,1)\).

This example lies in the deep fully subcritical regime.  Indeed,
\[
r_1=r_2=\frac23,\qquad
q'=-1,\qquad
\kappa=1,\qquad
r_1'=r_2'=-2.
\]
Since \(V_1(x)=V_2(x)=x\) and \(W_u(y)=1-y\), the two published
characteristics coincide and reduce to
\begin{equation}
\mathcal A_6(u)=\mathcal A_7(u)
=
\int_0^1
\left(
\int_x^1(1-y)^{4/3}y^{-2/3}\,\dd y
\right)^{3/2}
x^{-1/2}\,\dd x.
\label{eq:qhalf-A6-explicit}
\end{equation}
The inner integral converges to the finite beta integral
\(\int_0^1(1-y)^{4/3}y^{-2/3}\,\dd y\) as \(x\downarrow0\), so the outer
integrand is \(O(x^{-1/2})\) there.  As \(x\uparrow1\), the inner integral is
\(O((1-x)^{7/3})\), and the outer integrand is
\(O((1-x)^{7/2})\).  Thus \eqref{eq:qhalf-A6-explicit} is finite.  This also
checks boundedness through \eqref{eq:subcritical-A1-norm}, with the negative
powers interpreted by \eqref{eq:algebraic-companion} and
\eqref{eq:ri-algebraic-companion}.

The finite-rank estimate can also be made explicit.  For the uniform
partition of \([c,d]\) into \(m\) pieces,
\[
M_1=M_2=d^{1/2},
\qquad
\omega_1(h)=\omega_2(h)=h^{1/2},
\qquad
U_{c,d}=d-c,
\]
and hence
\[
E_{c,d}(\mathcal P_m)
=
2d^{1/2}(d-c)^{5/2}m^{-1/2}.
\]
Equations \eqref{eq:qhalf-lower-tail}--\eqref{eq:qhalf-upper-tail},
\eqref{eq:truncation-error-estimate}, and
\eqref{eq:uniform-partition-approximation-bound} yield
\begin{align}
a_{m+1}^{(2)}(\mathcal H_2)
&\leq
\Biggl[
\frac23\left(c^{3/2}+1-d^{3/2}\right)\notag\\
&\hspace{18mm}
+\sqrt2\,d^{1/4}(d-c)^{5/4}m^{-1/4}
\Biggr]^2.
\label{eq:qhalf-approximation-rate-window}
\end{align}
Taking \(c_m=m^{-1/6}\) and \(d_m=1-m^{-1/4}\) for sufficiently large
\(m\) shows that
\begin{equation}
a_{m+1}^{(2)}(\mathcal H_2)=O(m^{-1/2}).
\label{eq:qhalf-approximation-rate}
\end{equation}
This calculation exhibits the \(q\)-power addition mechanism at both the
tail and finite-rank levels.
\end{example}

\medskip
\begin{remark}[Interpretation]
The four examples isolate the role of the endpoint conditions. On the
bounded interval with regular weights, both truncated characteristics
vanish explicitly. On the half-line, however, the critical decay
\[
u_3(x)=2(1+x)^{-3}
\]
produces a bounded operator whose upper-tail characteristic does not
vanish.  The asymmetric example shows the same phenomenon in the mixed
range \(p_1=q<p_2\), with a different critical exponent.  Thus the
endpoint-tail requirements constitute genuine compactness conditions rather
than a reformulation of boundedness.  The final example verifies the
nonlocally convex case \(q=1/2\) by both the operator tail test and the deep
subcritical weight characteristic, and it supplies an explicit
approximation-number upper decay estimate.
\end{remark}

\section{Endpoint regimes outside the present framework}
\label{sec:limitations}

The localisation and quantitative theorems cover
\(1<p_1,p_2<\infty\) and \(0<q<\infty\).  Three genuine boundary
phenomena remain outside this framework.

\subsection{\texorpdfstring{The input endpoint \(p_i=1\)}
{The input endpoint pi=1}}

When \(p_i=1\), the conjugate exponent is infinite.  The local dual-weight
integral in Lemma~\ref{lem:local-Hardy-estimates} must then be replaced by an
essential-supremum expression.  In particular, the moduli
\(\omega_i(\delta)\) used to control the sampled Hardy primitives no longer
follow from the same absolute-continuity argument.  A treatment of this
endpoint would therefore require different local hypotheses and a revised
middle-window approximation lemma.

\subsection{\texorpdfstring{The input endpoint \(p_i=\infty\)}
{The input endpoint pi=infinity}}

The input endpoint \(p_i=\infty\) is also outside the standing assumptions.
The finite-\(p_i\) dual-weight formula used in
Lemma~\ref{lem:local-Hardy-estimates} is not the natural local evaluation
mechanism at this endpoint.  There is an additional well-definedness issue
when the interval is left-infinite. Even in the unweighted case, the full
primitive
\[
Hf(x)=\int_{-\infty}^{x}f(t)\,\dd t
\]
need not be finite for an arbitrary \(f\in L^\infty\).  Thus this endpoint
requires separate well-definedness assumptions and a different
strict-middle compactness and approximation argument.

\subsection{\texorpdfstring{The target endpoint \(q=\infty\)}
{The target endpoint q=infinity}}

Uniform absolute continuity on compact subsets of \(L^q\), which drives the
necessity proof for every finite \(q\), fails in \(L^\infty\).  This is not
only a technical defect of the proof. The endpoint-tail criterion itself can
fail for a compact operator.

Indeed, let \(I=(0,1)\), \(p_1=p_2=2\), and \(u=v_1=v_2\equiv1\).  The linear
Hardy map sends the unit ball of \(L^2(0,1)\) to a uniformly bounded and
equicontinuous subset of \(C[0,1]\), since
\[
|Hf(y)-Hf(x)|\leq |y-x|^{1/2}\norm{f}_2.
\]
The Arzel\`a--Ascoli theorem and continuity of pointwise multiplication show
that
\[
\mathcal H_2:L^2(0,1)\times L^2(0,1)\longrightarrow L^\infty(0,1)
\]
is compact.  On the other hand, for \(f=g\equiv1\),
\(\mathcal H_2(f,g)(x)=x^2\), and hence
\[
\norm{\chi_{(d,1)}\mathcal H_2(f,g)}_\infty=1
\qquad(0<d<1).
\]
The general Cauchy--Schwarz bound gives the reverse operator estimate, so
\(\beta(d)=1\) for every \(d<1\).  Thus compactness does not force the
\(L^\infty\) upper-tail norm to vanish.  Any \(q=\infty\) theorem must use a
different compactness invariant, such as equicontinuity, rather than the
finite-\(q\) endpoint-mass mechanism.

\section{Conclusion}
\label{sec:conclusion}

For every bounded weighted bilinear Hardy operator in the stated setting, all
loss of compactness is carried by the two endpoints.  The norm of the
combined endpoint tail has an exact directed limit. It equals the essential
(quasi-)norm, the distance from finite-rank bilinear maps, and the Hausdorff
measure of noncompactness of the joint image of the two unit balls under the
quasi-norm-radius convention.  At the complete-metric scale, the
corresponding \(\vartheta\)-powered quantities coincide exactly.  In
particular, the qualitative localisation theorem and the quantitative
noncompactness theorem are two forms of the same endpoint principle.
The abstract compact-middle and finite-rank tiers isolate this reusable
principle from the interior Hardy realization.

The middle-window argument is constructive.  Piecewise-constant sampling of
the two Hardy primitives produces rank-controlled bilinear maps, with an
error governed by the endpoint tail and the local dual-weight moduli.  This
gives explicit approximation-number estimates.  Once finite-rank distance is
identified with the tail limit, the convergence of the bilinear approximation
numbers to the essential (quasi-)norm follows directly from their defining
rank infima.

Applying prior norm-equivalent boundedness characteristics to truncated
output weights yields weight-only compactness criteria throughout
\(1<p_i<\infty\), \(0<q<\infty\), namely the convex regime, the two mixed
regimes, and the moderate and deep fully subcritical regimes.  The formulas
and the regime split belong to the cited boundedness theory. Their
endpoint-truncated compactness consequences are obtained here.  The four
examples illustrate the separation of boundedness from compactness at sharp
thresholds.  In
particular, the asymmetric choice \(p_1=q=2<p_2=4\) has critical decay
\(r=7/2\), where the operator is bounded but its upper endpoint obstruction
persists.

The formulation covers bounded intervals, both half-lines, and the whole
real line.  In the nonlocally convex range \(0<q<1\), the complete
\(\vartheta\)-metric, rather than local convexity, supports the compactness
argument, while disjoint-support \(q\)-additivity retains the exact tail and
approximation formulas.  The \(q=1/2\) model also shows concretely how the
negative algebraic powers in the deeply subcritical characteristic are
handled.  The input endpoints \(p_i=1\) and \(p_i=\infty\), and the target
endpoint \(q=\infty\), remain outside the present framework for the reasons
recorded in Section~\ref{sec:limitations}.

\section*{Declarations}

\paragraph{Data availability.}
No empirical dataset is used in this mathematical study.

\paragraph{Funding.}
This work received no funding.
\paragraph{Competing interests.}
The author declares no competing interests.

\section*{Generative AI disclosure}
OpenAI's ChatGPT, including ChatGPT Work, and Codex were used as assistive tools during manuscript preparation for mathematical consistency checking, language refinement, readability improvement, and LaTeX preparation. The author independently reviewed the mathematical arguments, references, and final manuscript and takes full responsibility for all statements, proofs, and conclusions.

\appendix
\section{Auxiliary compactness principles in the quasi-Banach range}
\label{app:auxiliary}

The following lemmas isolate the three points at which the proof for
\(0<q<1\) differs from a normed-space argument.

\begin{lemma}[Uniform absolute continuity on compact subsets of \(L^q\)]
\label{lem:uniform-absolute-continuity}
Let \(0<q<\infty\), and let \(K\) be a compact subset of \(L^q(u;I)\).  If
\(E_n\subset I\) are measurable and
\[
\chi_{E_n}(x)\longrightarrow0
\quad\text{for \(u(x)\,dx\)-almost every }x\in I,
\]
then
\begin{equation}
\lim_{n\to\infty}
\sup_{h\in K}\norm{\chi_{E_n}h}_{L^q(u;I)}=0.
\label{eq:uniform-absolute-continuity}
\end{equation}
\end{lemma}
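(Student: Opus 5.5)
The plan is a finite-net argument carried out in the \(\vartheta\)-powered metric \(\rho_Y\), combined with the pointwise tail decay already proved in Lemma~\ref{lem:weighted-Lq-completeness}. Fix \(\varepsilon>0\). Since \(K\) is compact in the metric space \((Y,\rho_Y)\), it is totally bounded. We may therefore choose finitely many centres \(h_1,\ldots,h_N\in K\) such that every \(h\in K\) satisfies \(\rho_Y(h,h_j)<\varepsilon^{\vartheta}\) for some \(j\), that is, \(\norm{h-h_j}_Y<\varepsilon\).

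Next we control the tails of the centres. Each \(h_j\) lies in \(Y\), and \(\chi_{E_n}\to0\) holds \(\mu_u\)-almost everywhere. Hence \eqref{eq:indicator-tail-pointwise} gives \(\norm{\chi_{E_n}h_j}_Y\to0\) for each \(j\); concretely, this is dominated convergence applied to \(\chi_{E_n}|h_j|^q\le|h_j|^q\). Because there are only finitely many centres, we can choose \(n_0\) such that
\[
\max_{1\le j\le N}\norm{\chi_{E_n}h_j}_Y<\varepsilon
\qquad (n\ge n_0).
\]

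Now let \(h\in K\) and \(n\ge n_0\), and pick \(j\) as above. We write \(\chi_{E_n}h=\chi_{E_n}(h-h_j)+\chi_{E_n}h_j\). The \(\vartheta\)-triangle inequality \eqref{eq:theta-triangle} and the indicator contraction \eqref{eq:indicator-nonexpansive} then give
\[
\norm{\chi_{E_n}h}_Y^{\vartheta}
\le\norm{h-h_j}_Y^{\vartheta}+\norm{\chi_{E_n}h_j}_Y^{\vartheta}
<2\varepsilon^{\vartheta}.
\]
Taking the supremum over \(h\in K\) yields \(\sup_{h\in K}\norm{\chi_{E_n}h}_Y\le 2^{1/\vartheta}\varepsilon\) for all \(n\ge n_0\). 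Since \(\varepsilon\) was arbitrary, this is \eqref{eq:uniform-absolute-continuity}.

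There is no serious obstacle in this argument. The only point needing care is the case \(0<q<1\), where the ordinary triangle inequality must be replaced by its \(\vartheta\)-power and no local convexity may be invoked. This affects only the harmless constant \(2^{1/\vartheta}\). Note also that the hypothesis requires \(\chi_{E_n}\to0\) only pointwise almost everywhere, not monotonically. This is exactly what dominated convergence for the finitely many centres needs, so the lemma covers the exhaustions of Proposition~\ref{prop:endpoint-exhaustions} as well as the directed limits in Theorem~\ref{thm:abstract-compact-middle-localisation}.
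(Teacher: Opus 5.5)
Your proposal is correct and follows essentially the same finite-net argument as the paper: centres taken from the total boundedness of \(K\), dominated convergence for the finitely many centres, and the \(\vartheta\)-triangle inequality combined with indicator contraction. The only difference is cosmetic: the paper pre-scales the radius to \(\eta=\varepsilon/2^{1/\vartheta}\) so the bound comes out as exactly \(\varepsilon\), whereas you carry the factor \(2^{1/\vartheta}\) to the end, which is equivalent.
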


\begin{proof}
Let \(\vartheta=\min\{1,q\}\), fix \(\varepsilon>0\), and put
\[
\eta:=\frac{\varepsilon}{2^{1/\vartheta}}.
\]
Compactness in the metric \eqref{eq:Lq-metric} implies total boundedness in
the quasi-norm topology.  Hence there are \(h_1,\ldots,h_N\in K\) such that
\[
K\subset\bigcup_{j=1}^{N}B_Y(h_j,\eta).
\]
For every \(j\),
\[
\norm{\chi_{E_n}h_j}_Y^q
=
\int_I\chi_{E_n}(x)|h_j(x)|^q u(x)\,dx
\longrightarrow0
\]
by \eqref{eq:indicator-tail-pointwise}, equivalently by dominated
convergence.  Since the set of centres is finite, there is
\(n_0\) such that
\[
\norm{\chi_{E_n}h_j}_Y<\eta
\]
for all \(j\) and \(n\geq n_0\).

Given \(h\in K\), choose \(j\) with \(\norm{h-h_j}_Y<\eta\).  Multiplication
by an indicator is contractive on \(Y\).  Thus, by
\eqref{eq:theta-triangle},
\begin{align*}
\norm{\chi_{E_n}h}_Y^{\vartheta}
&\leq
\norm{\chi_{E_n}(h-h_j)}_Y^{\vartheta}
+\norm{\chi_{E_n}h_j}_Y^{\vartheta}\\
&<2\eta^{\vartheta}=\varepsilon^{\vartheta}
\end{align*}
for \(n\geq n_0\).  Taking the supremum over \(h\in K\) proves
\eqref{eq:uniform-absolute-continuity}.  Notice that no convexity of the unit
ball is used.
\end{proof}

\begin{lemma}[Finite-dimensional ranges]
\label{lem:finite-dimensional-range}
Let \(Y\) be a quasi-normed linear space whose quasi-norm satisfies
\eqref{eq:theta-triangle} for some \(0<\vartheta\leq1\).  Every bounded subset
of a finite-dimensional subspace of \(Y\) has compact closure.  Consequently,
every bounded finite-rank bilinear map into \(Y\) is compact.
\end{lemma}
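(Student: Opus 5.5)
The plan is to work in a fixed finite-dimensional subspace \(F\subset Y\) of dimension \(n\) with a basis \(e_1,\dots,e_n\). Consider the coordinate map
\[
\Phi:\mathbb K^n\to F,\qquad \Phi(t)=\sum_k t_ke_k .
\]
The aim is to show that \(\Phi\) is a homeomorphism onto \(F\) for the topology induced by \(\rho(y,z)=\norm{y-z}_Y^{\vartheta}\). With that in hand, a bounded subset of \(F\) corresponds to a bounded subset of \(\mathbb K^n\), whose closure is compact by Heine--Borel. Continuity of \(\Phi\) carries this compact set onto a compact subset of \(F\) containing the original set. A compact set is closed in the metric space \(Y\), so the closure of the set in \(Y\) is a closed subset of a compact set, hence compact.

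Continuity of \(\Phi\) is direct from \eqref{eq:theta-triangle}:
\[
\norm{\Phi(t)-\Phi(s)}^{\vartheta}\le \sum_k |t_k-s_k|^{\vartheta}\norm{e_k}^{\vartheta}.
\]
The main step is the lower bound. I would show that there is \(m>0\) with \(\norm{\Phi(t)}\ge m\) whenever \(|t|=1\), using the Euclidean norm on \(\mathbb K^n\). The function \(t\mapsto\norm{\Phi(t)}\) is continuous by the estimate just displayed, and it is positive on the unit sphere because the \(e_k\) are linearly independent and \(\norm{\cdot}_Y\) is a quasi-norm vanishing only at \(0\). Compactness of the sphere then gives \(m>0\). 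Homogeneity yields \(\norm{\Phi(t)}\ge m|t|\) for all \(t\), which gives both the boundedness transfer and continuity of \(\Phi^{-1}\). No convexity is used anywhere.

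For the bilinear consequence, let \(T\) be bounded with range spanning a finite-dimensional \(F\). Then \(T(B_{X_1}\times B_{X_2})\) lies in \(F\) and is bounded in quasi-norm by \(\norm{T}\), so the first part gives compact closure. The only delicate point is the positivity-plus-compactness step on the sphere, since it relies on continuity of the quasi-norm, and that continuity must come from the \(\vartheta\)-triangle inequality rather than from the ordinary triangle inequality. For this I would use \(\bigl|\norm{y}^{\vartheta}-\norm{z}^{\vartheta}\bigr|\le\norm{y-z}^{\vartheta}\).
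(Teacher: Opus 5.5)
Your proposal is correct and follows essentially the same route as the paper's proof. Both use the coordinate map, continuity from the iterated \(\vartheta\)-triangle inequality, and continuity of the quasi-norm via \(\bigl|\norm{x}_Y^{\vartheta}-\norm{y}_Y^{\vartheta}\bigr|\leq\norm{x-y}_Y^{\vartheta}\). Both then take a positive minimum on the Euclidean unit sphere, extend it by homogeneity, and push the Euclidean closure forward to a compact set containing the closure.
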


\begin{proof}
Let \(E=\operatorname{span}\{e_1,\ldots,e_m\}\subset Y\), with the displayed
vectors linearly independent, and define
\[
J:\mathbb K^m\longrightarrow E,
\qquad
J(a_1,\ldots,a_m):=\sum_{j=1}^m a_j e_j,
\]
where \(\mathbb K\) is \(\mathbb R\) or \(\mathbb C\).  Iterating
\eqref{eq:theta-triangle} gives
\[
\norm{J(a)}_Y^{\vartheta}
\leq
\sum_{j=1}^m |a_j|^{\vartheta}\norm{e_j}_Y^{\vartheta},
\]
so \(J\) is continuous.  Also,
\[
\left|\norm{x}_Y^{\vartheta}-\norm{y}_Y^{\vartheta}\right|
\leq\norm{x-y}_Y^{\vartheta},
\]
which follows by applying \eqref{eq:theta-triangle} in both directions.
Therefore \(a\mapsto\norm{J(a)}_Y\) is continuous.  It is strictly positive
on the Euclidean unit sphere, which is compact, and hence has a positive
minimum there.  Homogeneity now shows that \(J^{-1}\) is continuous.  Thus
the quasi-norm topology on \(E\) is the ordinary finite-dimensional topology.
If \(S\subset E\) is bounded, then \(J^{-1}(S)\) is Euclidean-bounded. The
image under \(J\) of its Euclidean closure is a compact subset of \(Y\) that
contains the closure of \(S\).  Hence bounded subsets of \(E\) have compact
closure.

If a bounded bilinear map \(T\) has range in \(E\), then
\(T(B_{X_1}\times B_{X_2})\) is bounded in \(E\), so the preceding conclusion
shows that its closure is compact in \(Y\).
\end{proof}

\begin{lemma}[Operator-quasi-norm closure of compact bilinear maps]
\label{lem:closure-compact-bilinear}
Let \(X_1,X_2\) be normed spaces, and let \(Y\) be a complete quasi-normed
space satisfying \eqref{eq:theta-triangle} for some
\(0<\vartheta\leq1\).  Suppose that \(T_n:X_1\times X_2\to Y\) are compact
bilinear maps and
\[
\norm{T_n-T}_{X_1\times X_2\to Y}\longrightarrow0
\]
for a bounded bilinear map \(T:X_1\times X_2\to Y\).  Then \(T\) is compact.
\end{lemma}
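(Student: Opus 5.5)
The plan is to show directly that \(T(B_{X_1}\times B_{X_2})\) is totally bounded in the complete metric \(\rho_Y\). Completeness of \((Y,\rho_Y)\) then gives that its closure is compact. The argument is the usual \(\varepsilon/3\) net argument, with the ordinary triangle inequality replaced by the \(\vartheta\)-triangle inequality \eqref{eq:theta-triangle}. No convexity or Hahn--Banach step is needed.

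Fix \(\varepsilon>0\) and choose \(n\) so large that \(\norm{T_n-T}^{\vartheta}<\varepsilon^{\vartheta}/2\). Then, for every \((f,g)\) in the product unit ball,
\[
\norm{T(f,g)-T_n(f,g)}_Y^{\vartheta}\leq\norm{T-T_n}^{\vartheta}\norm{f}_{X_1}^{\vartheta}\norm{g}_{X_2}^{\vartheta}<\varepsilon^{\vartheta}/2.
\]
Since \(T_n\) is compact, \(\overline{T_n(B_{X_1}\times B_{X_2})}\) is compact in \(\rho_Y\), hence totally bounded. Choose finitely many centres \(y_1,\dots,y_N\in Y\) with every point of \(T_n(B_{X_1}\times B_{X_2})\) within \(\rho_Y\)-distance \(\varepsilon^{\vartheta}/2\) of some \(y_j\).

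For \((f,g)\) in the ball, pick \(j\) with \(\norm{T_n(f,g)-y_j}_Y^{\vartheta}<\varepsilon^{\vartheta}/2\). The \(\vartheta\)-triangle inequality then gives
\[
\rho_Y(T(f,g),y_j)<\varepsilon^{\vartheta}.
\]
So \(T(B_{X_1}\times B_{X_2})\) has a finite \(\varepsilon^{\vartheta}\)-net in \(\rho_Y\) for every \(\varepsilon\), and is totally bounded. Its closure is therefore closed and totally bounded in a complete metric space, hence compact. The closure is the same in \(\rho_Y\) and in the quasi-norm topology, since they coincide by Lemma~\ref{lem:weighted-Lq-completeness} (or in general, because \(\rho_Y\) induces the quasi-norm topology).

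The only point requiring care, and the mild obstacle, is bookkeeping between radii in quasi-norm and in the metric. I will work entirely with \(\rho_Y\), where genuine additivity holds, so the constants do not degrade. I will also check that boundedness of \(T\) is used only to make \(\norm{T_n-T}\) meaningful, and that the centres \(y_j\) may be taken in \(Y\) rather than in the image, which is harmless for total boundedness.
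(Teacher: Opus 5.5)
Your proposal is correct and is essentially the paper's own proof. Your choice \(\norm{T_n-T}^{\vartheta}<\varepsilon^{\vartheta}/2\) is exactly the paper's \(\eta=\varepsilon/2^{1/\vartheta}\) written at the metric scale. It is followed by the same transfer of a finite net from \(T_n(B_{X_1}\times B_{X_2})\) to \(T(B_{X_1}\times B_{X_2})\) via the \(\vartheta\)-triangle inequality, and by the same appeal to completeness to conclude that the closure is compact.
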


\begin{proof}
Let \(B_i\) be the closed unit ball of \(X_i\), fix \(\varepsilon>0\), and set
\(\eta=\varepsilon/2^{1/\vartheta}\).  Choose \(n\) so that
\[
\norm{T-T_n}_{X_1\times X_2\to Y}<\eta.
\]
Since \(T_n\) is compact, \(T_n(B_1\times B_2)\) has a finite
\(\eta\)-net \(y_1,\ldots,y_N\).  For \(x_i\in B_i\), choose \(j\) such that
\(\norm{T_n(x_1,x_2)-y_j}_Y<\eta\).  Then
\begin{align*}
\norm{T(x_1,x_2)-y_j}_Y^{\vartheta}
&\leq
\norm{(T-T_n)(x_1,x_2)}_Y^{\vartheta}
+\norm{T_n(x_1,x_2)-y_j}_Y^{\vartheta}\\
&<2\eta^{\vartheta}=\varepsilon^{\vartheta}.
\end{align*}
Thus \(T(B_1\times B_2)\) is totally bounded in the metric induced by the
quasi-norm.  Completeness of \(Y\) implies that its closure is compact.
\end{proof}

\end{document}